\documentclass[reqno,a4paper]{cas-sc}
\usepackage{amssymb}
\usepackage{enumitem}

\usepackage{chngcntr}
\counterwithin*{equation}{section}
\counterwithin*{equation}{subsection}
\usepackage{stmaryrd}
\usepackage[utf8]{inputenc}
\usepackage{csquotes}
\usepackage[english]{babel}
\usepackage{amsthm}
\usepackage{mathtools}
\usepackage[all]{xy}
\usepackage{tikz-cd}

\usepackage{color}

\newtheorem{theorem}{Theorem}[section]

\newtheorem{lemma}[theorem]{Lemma}
\newtheorem{definition}[theorem]{Definition}
\newtheorem{definitionlemma}[theorem]{Definition-Lemma}
\newtheorem{proposition}[theorem]{Proposition}
\newtheorem{remark}[theorem]{Remark}
\newtheorem{example}[theorem]{Example}

\let\originalmiddle=\middle
\renewcommand{\middle}[1]{\mathrel{}\originalmiddle#1\mathrel{}}

\newcommand{\st}[1]{\star_{\scalebox{0.4}{\!\!$#1$}}}

\newcommand{\ee}[2]{{\sf \underline{e}}_{\scalebox{0.4}{$#1,#2$}}}
\newcommand{\id}[1]{\operatorname{id}_{\scalebox{0.4}{$#1$}}}
\newcommand{\e}[1]{{ \sf e}_{\scalebox{0.4}{$#1$}}}

\newcommand{\str}[2]{\overset{\bigstar}{\scriptscriptstyle M_{#1}}_{{#2}=1}^{n}}
\DeclareMathOperator{\Aut}{Aut}
\DeclareMathOperator{\Hom}{Hom}
\DeclareMathOperator{\End}{End}

\DeclareMathOperator{\Nil}{Nil}

\begin{document}

\title{Automorphism groups of direct products of multiplicative monoids of certain rings}
\author[1]{Joseph Atalaye}[orcid=0009-0005-7329-2889]
\ead{26828146@sun.ac.za}

\author[1,2]{Liam Baker}[orcid=0000-0003-2728-3963]
\ead{liambaker@sun.ac.za}
\ead[URL]{https://math.sun.ac.za/liambaker}
\cormark[1]

\author[3,4]{Sophie Marques}[orcid=0000-0001-8700-3375]
\ead{smarques@math.uminho.pt}
\ead[URL]{https://sites.google.com/site/sophiemarques64/}

\affiliation[1]{
    organisation={Department of Mathematical Sciences,
    University of Stellenbosch},
    city={Stellenbosch},
    country={South Africa}
}
\affiliation[2]{
    organisation={NITheCS (National Institute for Theoretical and Computational Sciences)},
    country={South Africa}
}
\affiliation[3]{
    organisation={Department of Mathematics,
    University of Minho},
    city={Minho},
    country={Portugal}
}
\affiliation[4]{
    organisation={CMAT (Centre of Mathematics)},
    country={Portugal}
}

\begin{keywords}
    Multiplicative automorphism \sep monoid \sep group theory \sep algebraic structures \sep unit group \sep ring theory
\end{keywords}

\begin{abstract}
In this paper, we establish a rigidity result for automorphisms of multiplicative direct products of $D$-rings which are total ring of fraction that have pairwise distinct cardinalities.
Under these assumptions, every automorphism acts independently on each factor, so that no interaction between distinct components occurs; in particular, the automorphism group decomposes canonically as the direct product of the automorphism groups of the factors.
As a consequence, the automorphism group of the multiplicative monoid of integers modulo $n$ is entirely determined by its $p$-power components.
\end{abstract}

\maketitle

\setcounter{tocdepth}{3}
\tableofcontents
\raggedbottom 

\section*{Introduction}

The determination of automorphism groups is a central problem in algebra, as it encodes the intrinsic symmetries of algebraic structures and reveals to what extent their global behaviour is governed by local data.
This problem becomes particularly subtle for direct products, where automorphisms may act independently on each factor, permute components, or involve more intricate interactions between them.

A fundamental question is therefore to determine which of these behaviours actually occur in a given setting.
This question is especially relevant for multiplicative monoids arising in arithmetic, such as $(\mathbb{Z}/n\mathbb{Z},\cdot)$, where understanding the global symmetry structure is closely related to the way homomorphisms factor through $p$-power components (see \cite{zbMATH01650459, zbMATH06968189, zbMATH03549232}).

The study of automorphism groups of direct products has been extensively developed in group theory.
In a series of works, Bidwell, together with Curran and McCaughan, analyzed automorphisms of direct products of finite groups \cite{bidwell2006automorphisms, bidwell2008automorphisms}.
Independently, Francis \cite{zbMATH03780218} investigated related questions in the context of surface groups.

These developments naturally suggest analogous questions in the broader setting of monoids.
In this direction, Karimi and Doostie \cite{ahmadidelir2012automorphisms} and Brescia \cite{zbMATH08024120} studied automorphisms of direct products of monoids and semigroups, obtaining structural descriptions in specific cases.

In our previous work \cite{JoeSophieLiam}, we carried out a detailed analysis of the multiplicative automorphism group of the $p$-power modular monoid $(\mathbb{Z}/p^e\mathbb{Z},\cdot)$, obtaining explicit structural decompositions.
While this provides a complete understanding of the local case, it leaves open the global question of how these symmetries behave under direct products.

\medskip

The main purpose of the present paper is to show that, in a natural and broad class of multiplicative monoids, no such interactions occur.
More precisely, we establish a rigidity theorem for automorphisms of direct products of multiplicative monoids associated with $D$-rings which are total rings of fractions.
We prove that for a family of such rings with \emph{pairwise distinct cardinalities}, every automorphism acts independently on each factor.
In particular, we obtain a canonical decomposition
\[
\Aut(R_1 \times \dotsm \times R_n,\cdot)
\cong
\Aut(R_1,\cdot)\times \dotsm \times \Aut(R_n,\cdot).
\]

Thus, despite the potential for complex behaviour, the global automorphism group is entirely determined by its local components: all off-diagonal contributions are forced to vanish.
The assumption on distinct cardinalities plays a decisive role in preventing interactions between different factors, thereby enforcing a diagonal structure.

\medskip

A primary motivation for this result is the arithmetic case of the multiplicative monoid $(\mathbb{Z}/n\mathbb{Z},\cdot)$, which admits the canonical decomposition
\[
(\mathbb{Z}/n\mathbb{Z},\cdot)\cong \prod_{p^e \parallel n} (\mathbb{Z}/p^e\mathbb{Z},\cdot).
\]
Our theorem shows that this decomposition is fully reflected at the level of automorphisms, yielding
\[
\Aut(\mathbb{Z}/n\mathbb{Z},\cdot)
\cong
\prod_{p^e \parallel n}\Aut(\mathbb{Z}/p^e\mathbb{Z},\cdot).
\]


More generally, the result applies to a large and natural class of rings.
In the finite case, $D$-rings which are total rings of fractions coincide with finite local rings, and thus include classical examples such as $\mathbb{Z}/p^e\mathbb{Z}$, truncated polynomial rings $k[x]/(P^n)$ for $k$ a finite field and irreducible $P \in k[x]$, and mixed nilpotent extensions such as $\mathbb{Z}/p^e\mathbb{Z}[x]/(P^n)$.
In the infinite case, the same condition characterizes local rings whose maximal ideal coincides with the nilradical, thereby extending the scope of the theorem beyond the Artinian setting.

This shows that the arithmetic situation is not isolated, but rather a manifestation of a general structural principle governing a wide class of multiplicative monoids, provided that the components can be distinguished by their cardinalities.

\medskip

Our approach is based on representing endomorphisms of direct products of monoids by matrices whose entries are homomorphisms between the factors.
The core of the argument is to show that all off-diagonal components of an automorphism necessarily vanish, thereby forcing a diagonal form and yielding the desired decomposition.

The paper is organized as follows.

\noindent
In Section~1, we recall the necessary background on zero divisors, $D$-rings, and total rings of fractions, and establish their basic structural properties.
In particular, we characterize $D$-rings which are total rings of fractions.

\noindent
In Section~2, we develop a matrix description of endomorphisms of direct products of monoids, in which the entries are homomorphisms between the components.
In Subsection~\ref{Subsec21}, we characterize trivial homomorphisms between rings and $D$-rings (Proposition~\ref{Prop21}).
Subsection~\ref{Subsec232} is devoted to structural properties of these matrices, including criteria for invertibility and irreversibility (Proposition~\ref{Prop23} and Lemma~\ref{Lem24}).

\noindent
In Section~3, we establish the main rigidity result for direct products of multiplicative monoids associated with $D$-rings which are total rings of fractions.
Subsection~\ref{Sec51} contains the key structural analysis: we show that all off-diagonal components of an automorphism are trivial (Lemma~\ref{Lem33}), while the diagonal components are necessarily non-trivial (Lemma~\ref{Lem34}).
In Subsection~\ref{Subsec32}, we prove the main theorem (Theorem~\ref{Theo35}) and conclude with Example~\ref{Exp522}, which illustrates the breadth of the result and its applicability beyond the arithmetic setting.

\section{Preliminaries}\label{Sec1}

Throughout this paper, $\mathbb{N}$ denotes the set of natural numbers including $0$, $p$ a prime number, and $e$ a positive integer.
For simplicity, we write $[a]$ for $[a]_{p^e}$ and adopt the convention $[a]^0 = 1$ for all $a \in \mathbb{Z}$.
For integers $a$ and $b$, we denote by $\llbracket a,b \rrbracket = \{a,a+1,\dots,b\}$.

\medskip

The goal of this section is to collect the structural properties that will be used throughout the paper.
We begin with introducing the class of rings that naturally governs our setting.

\subsection{$D$-rings which are total rings of fractions}

The main results of this paper are formulated for rings satisfying a strong structural constraint: every element behaves either as a unit or as a nilpotent element.
This dichotomy is captured by the notion of a $D$-ring which is a total ring of fractions.
In this section, we describe the structure of such rings, showing that in the finite case they coincide with finite local rings, while in the infinite case they correspond to local rings whose maximal ideal consists entirely of nilpotent elements.

\medskip

\begin{definition}
Let $R$ be a ring.
\begin{enumerate}
    \item The set of zero divisors of $R$ is
    \[
    \mathcal{Z}_R \coloneq  \{\, z \in R \mid \exists\, t \in R\setminus\{0\} \text{ such that } zt = 0\,\}.
    \]
    \item The ring $R$ is called a \emph{$D$-ring} if every zero divisor of $R$ is nilpotent.
    \item The ring $R$ is called a \emph{total ring of fractions} if every non-zero divisor of $R$ is a unit.
\end{enumerate}
\end{definition}

\medskip

These two conditions combine to impose a very rigid structure.

\begin{proposition}\label{Prop:unit_or_nilpotent}
Let \(R\) be a $D$-ring which is a total ring of fractions.
Then every element of \(R\) is either a unit or nilpotent.
\end{proposition}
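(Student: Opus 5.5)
This is a direct case split: every element is either a zero divisor or a non-zero divisor, and each of the two hypotheses handles one case. Fix $r \in R$.

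\emph{Case 1: $r \in \mathcal{Z}_R$.} Then $R$ being a $D$-ring means $r$ is nilpotent.

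\emph{Case 2: $r \notin \mathcal{Z}_R$.} Then $r$ is a non-zero divisor, and since $R$ is a total ring of fractions, $r$ is a unit.

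So every element is a unit or nilpotent. I would also record that, when $R \neq 0$, these two possibilities are mutually exclusive. Suppose $u$ were a nilpotent unit with $u^m = 0$ and $m$ minimal. Then $1 = u^{m}u^{-m} = 0$, which forces $R = 0$. In particular, for $R \neq 0$ the set of units and the nilradical partition $R$.

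The only step needing care is the convention for $0$ and for the zero ring. By the definition of $\mathcal{Z}_R$, the element $0$ lies in $\mathcal{Z}_R$ exactly when $R \neq 0$, because we can take $t = 1 \neq 0$. In that case Case 1 applies and $0$ is nilpotent. If $R = 0$, then $0 = 1$ is a unit, and the statement still holds. There is no real obstacle beyond checking this degenerate case.
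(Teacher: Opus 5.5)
Your proof is correct and is the same argument as the paper's: split on whether $r$ is a zero divisor, then apply the $D$-ring hypothesis in one case and the total-ring-of-fractions hypothesis in the other. Your additional remarks on mutual exclusivity and on the zero ring are also correct, but the statement does not need them.
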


\begin{proof}
Let \(a\in R\).
If \(a\) is a zero divisor, then it is nilpotent since \(R\) is a $D$-ring.
Otherwise, \(a\) is not a zero divisor, hence a unit since \(R\) is a total ring of fractions.
\end{proof}


\begin{remark}
Proposition~\ref{Prop:unit_or_nilpotent} shows that in such rings there is a sharp dichotomy between invertible elements and nilpotent elements.
This dichotomy is the key structural feature underlying the rigidity phenomena studied in this paper.
\end{remark}

\medskip

We now show that, in the finite commutative setting, this condition is equivalent to locality.

\begin{proposition}\label{Prop:finite_total_ring_of_fractions}
Let \(R\) be a finite commutative ring.
Then every element of \(R\) is either a unit or a zero divisor.
In particular, every finite commutative ring is a total ring of fractions.
\end{proposition}

\begin{proof}
Let \(a\in R\).
If \(a\) is not a zero divisor, then the multiplication map $x\mapsto ax$ is injective.
Since \(R\) is finite, it is surjective.
Hence there exists \(x\in R\) such that \(ax=1\), so \(a\) is a unit.
\end{proof}

\begin{theorem}\label{thm:finite_D_rings_local}
Let \(R\) be a finite commutative ring.
Then \(R\) is a $D$-ring if and only if \(R\) is local.
\end{theorem}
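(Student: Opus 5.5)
We prove the two implications separately. In both directions we use Proposition~\ref{Prop:finite_total_ring_of_fractions}: in the finite commutative ring \(R\), every element is either a unit or a zero divisor.

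For the direction \emph{$D$-ring \(\Rightarrow\) local}, we show that the set \(N\) of non-units is an ideal. By Proposition~\ref{Prop:finite_total_ring_of_fractions}, \(N\) is exactly the set of zero divisors. Since \(R\) is a $D$-ring, \(N\) is therefore contained in the nilradical \(\Nil(R)\). Conversely, nilpotent elements are never units (here \(R\neq 0\) is implicit, since the zero ring is not local), so \(N=\Nil(R)\). The nilradical of a commutative ring is an ideal: it is closed under addition by the binomial theorem and absorbs multiplication. Hence the non-units form a proper ideal, and this ideal is then the unique maximal ideal. So \(R\) is local.

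For the direction \emph{local \(\Rightarrow\) $D$-ring}, let \(\mathfrak m\) be the maximal ideal. We want every element of \(\mathfrak m\) to be nilpotent; since every zero divisor is a non-unit and hence lies in \(\mathfrak m\), this gives the $D$-ring property. Take \(a\in\mathfrak m\). Because \(R\) is finite, the powers \(a^k\) repeat, say \(a^{k}=a^{k+m}\) with \(m\ge1\). Then \(a^k(1-a^m)=0\). Since \(a^m\in\mathfrak m\), the element \(1-a^m\) is a unit; otherwise it would lie in \(\mathfrak m\), and then so would \(1=(1-a^m)+a^m\). Multiplying by its inverse gives \(a^k=0\), so \(a\) is nilpotent. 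An alternative route is to note that a finite ring is Artinian, so its Jacobson radical \(\mathfrak m\) is nil. The elementwise argument is more self-contained, so we would use it.

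The main point requiring care is the second direction, namely the trick of extracting nilpotence from the periodicity of powers in a finite ring. Everything else is a direct consequence of the unit/zero-divisor dichotomy already established in Proposition~\ref{Prop:finite_total_ring_of_fractions}.
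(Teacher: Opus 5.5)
Your proof is correct, but it takes a different route from the paper in both directions. For $D$-ring $\Rightarrow$ local, the paper uses the structure theorem for Artinian rings, $R\cong R_1\times\dotsm\times R_t$ with each $R_i$ local. It rules out $t\geq 2$ because the idempotent $(1,0,\dots,0)$ would be a non-nilpotent zero divisor. You instead show directly that the non-units coincide with $\mathcal{Z}_R$, hence with $\Nil(R)$, which is an ideal. Your argument uses finiteness only through Proposition~\ref{Prop:finite_total_ring_of_fractions}. It therefore proves verbatim that every $D$-ring which is a total ring of fractions is local with maximal ideal $\Nil(R)$. This is exactly the argument the paper gives later for Theorem~\ref{thm:infinite_D_total_structure}, and it also identifies the maximal ideal. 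The idempotent argument is quicker once the structure theorem is granted, but it depends on that machinery. For local $\Rightarrow$ $D$-ring, the paper uses that the maximal ideal of a finite (Artinian) local ring is nilpotent. Your pigeonhole argument ($a^k(1-a^m)=0$ with $1-a^m$ a unit) is self-contained and only needs eventual periodicity of powers. This direction only requires $\mathcal{Z}_R\subseteq\mathfrak m$, which holds in any local ring. So, despite your opening sentence, Proposition~\ref{Prop:finite_total_ring_of_fractions} is not really used there. Finally, your parenthetical about $R\neq 0$ is needed for the statement itself: the zero ring is vacuously a $D$-ring but not local. The paper's proof also tacitly excludes it (the case $t=0$).
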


\begin{proof}
Assume that \(R\) is a $D$-ring.
Since \(R\) is finite, it is Artinian.
By the structure theorem for Artinian rings, we have
\[
R \cong R_1 \times \dotsm \times R_t,
\]
with each \(R_i\) local.
If \(t\geq 2\), the element \(e=(1,0,\dotsc,0)\) is a non-nilpotent zero divisor, a contradiction.
Hence \(R\) is local.

Conversely, if \(R\) is local with maximal ideal \(\mathfrak m\), then every non-unit lies in \(\mathfrak m\).
By Proposition~\ref{Prop:finite_total_ring_of_fractions}, every element is either a unit or a zero divisor, so the zero divisors are precisely \(\mathfrak m\).
Since \(R\) is Artinian, \(\mathfrak m\) is nilpotent, hence every zero divisor is nilpotent.
\end{proof}

Theorem~\ref{thm:finite_D_rings_local} shows that, in the finite case, $D$-rings which are total rings of fractions are exactly finite local rings.
In particular, their structure is well understood.

\medskip

In the finite case, the previous results reduce the study of $D$-rings which are total rings of fractions to the well-understood class of finite local rings.

\medskip

Let \((R,\mathfrak{m})\) be a finite local ring.
Then \(\mathfrak{m}\) is nilpotent and the residue ring \(R/\mathfrak{m}\) is a finite field.
In particular, \(R\) may be viewed as a finite field together with a nilpotent thickening, whose structure is controlled by the successive quotients
\[
\mathfrak{m}^i/\mathfrak{m}^{i+1}.
\]


\begin{example}\label{ex:finite_local_models}
Typical examples of finite local rings include:
\begin{itemize}
    \item the rings \(\mathbb{Z}/p^n\mathbb{Z}\), where \(p\) is prime,
    \item the rings \(k[\varepsilon]/(\varepsilon^n)\), where \(k\) is a finite field.
\end{itemize}
In the second case, one recovers the field \(k\) when \(n=1\), while for \(n \geq 2\) one obtains non-trivial nilpotent extensions.
These examples illustrate how finite local rings arise by adjoining nilpotent elements to a field.
\end{example}
While the previous examples arise from a single nilpotent parameter, the general situation may involve several independent nilpotent directions.
\medskip

\begin{example}[Non-principal nilpotent structure]\label{ex:non_principal_local}
Let \(k\) be a finite field.
The ring
\[
R = k[x,y]/(x^2,y^2,xy)
\]
is a finite local ring with maximal ideal \(\mathfrak m=(\bar x,\bar y)\) and residue field \(k\).

In this case, the maximal ideal is not generated by a single element.
More precisely, one has
\[
\mathfrak m^2 = 0,
\]
but \(\mathfrak m\) requires two generators.
This example shows that finite local rings need not be principal, and that their nilpotent structure may involve several independent directions.
\end{example}
The following example illustrates a more intricate nilpotent structure, combining both arithmetic and polynomial nilpotent directions.
\begin{example}[Mixed nilpotent structure]
Let \(p\) be a prime and \(n,m \geq 1\).
The ring
\[
R = (\mathbb{Z}/p^n\mathbb{Z})[x]/(x^m)
\]
is a finite local ring with maximal ideal
\[
\mathfrak m = (p,x)
\]
and residue field \(\mathbb{F}_p\).

This example combines two independent nilpotent directions: the arithmetic nilpotent element \(p\) and the geometric nilpotent element \(x\).
In particular, the maximal ideal is not principal, illustrating a more complex nilpotent structure.
\end{example}

We now turn to the infinite case.
While the defining conditions remain the same, the structure becomes more flexible.

\begin{theorem}\label{thm:infinite_D_total_structure}
Let \(R\) be a commutative ring.
The following are equivalent:
\begin{enumerate}
    \item \(R\) is a $D$-ring and a total ring of fractions;
    \item \(R\) is a local ring whose maximal ideal coincides with its nilradical.
\end{enumerate}
\end{theorem}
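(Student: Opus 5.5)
We prove the two implications separately, using Proposition~\ref{Prop:unit_or_nilpotent} as the bridge: every element is either a unit or nilpotent.

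For (1)$\Rightarrow$(2), let $\mathfrak n$ be the nilradical of $R$. By Proposition~\ref{Prop:unit_or_nilpotent}, every non-unit of $R$ lies in $\mathfrak n$. Conversely, a nilpotent element is never a unit. If $a^k=0$ and $ab=1$, then $0=a^kb^k=1$, which contradicts $1\neq 0$; we take rings to be nonzero, which is implicit since $R$ is to be local. Hence the set of non-units equals $\mathfrak n$, which is an ideal because $R$ is commutative. A commutative ring whose non-units form an ideal is local, and that ideal is its unique maximal ideal. Indeed, any proper ideal consists of non-units, so it is contained in $\mathfrak n$. Therefore $R$ is local with maximal ideal $\mathfrak m=\mathfrak n$.

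For (2)$\Rightarrow$(1), let $\mathfrak m=\mathfrak n$. In a local ring every element outside $\mathfrak m$ is a unit, so every element is either a unit or nilpotent.

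\begin{itemize}
\item \emph{$D$-ring.} Let $z$ be a zero divisor, say $zt=0$ with $t\neq 0$. Then $z$ is not a unit, since otherwise $t=z^{-1}zt=0$. Hence $z\in\mathfrak m=\mathfrak n$, so $z$ is nilpotent.
\item \emph{Total ring of fractions.} Let $a$ be a non-zero divisor. If $a$ were not a unit, it would be nilpotent. Take $k\ge 1$ minimal with $a^k=0$. If $k=1$, then $a=0$, which is a zero divisor because $0\cdot 1=0$ and $1\neq0$. If $k\ge 2$, then $a\cdot a^{k-1}=0$ with $a^{k-1}\neq 0$, so again $a$ is a zero divisor. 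Both cases contradict the choice of $a$, so $a$ is a unit.
\end{itemize}

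There is no real obstacle. The only points needing care are that nilpotents are non-units (which uses $1\neq 0$) and the $k=1$ edge case in the minimality argument.
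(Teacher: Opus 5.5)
Your proof is correct and follows essentially the same route as the paper's. For (1)$\Rightarrow$(2) it uses the unit-or-nilpotent dichotomy to identify the non-units with $\Nil(R)$. For (2)$\Rightarrow$(1) it notes that zero divisors are non-units, hence lie in $\mathfrak m=\Nil(R)$, and that non-zero divisors lie outside $\mathfrak m$.

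You add details the paper leaves tacit. You check that nilpotents are non-units and are zero divisors, including the $k=1$ case. You also flag the standing assumption $R\neq 0$, which is genuinely needed, since the zero ring satisfies (1) vacuously but is not local.
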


\begin{proof}
Assume that \(R\) is a $D$-ring which is a total ring of fractions.
By Proposition~\ref{Prop:unit_or_nilpotent}, every element is either a unit or nilpotent.
Thus the set of non-units is precisely \(\Nil(R)\), which is an ideal.
Hence \(R\) is local with maximal ideal \(\Nil(R)\).

Conversely, if \(R\) is local with maximal ideal \(\mathfrak m=\Nil(R)\), then every zero divisor lies in \(\mathfrak m\) and is therefore nilpotent.
Moreover, every non-zero divisor is not in \(\mathfrak m\), hence is a unit.
\end{proof}

\begin{example}
Let \(k\) be a field and set
\[
R = k[x_1,x_2,x_3,\dots]/(x_1^2,x_2^3,x_3^4,\dots).
\]
Then \(R\) is a local ring whose maximal ideal consists entirely of nilpotent elements, but is not nilpotent.
Hence \(R\) is a $D$-ring which is a total ring of fractions, but is not Artinian.
\end{example}

\smallskip

\subsection{Monoid-theoretic preliminaries}

We now recall basic facts about monoids and their homomorphisms.
The proofs are straightforward and follow directly from the definitions.

\begin{lemma}\label{Lem227}\cite[Lemma 1.3]{JoeSophieLiam}
Let $(M,\star_M,e_M)$ and $(N,\star_N,e_N)$ be monoids and $\phi \in \Hom((M,\star_M),(N,\star_N))$.
Then:
\begin{enumerate}
\item $\phi(M^\times) \subseteq N^\times$.
If $\phi$ is an automorphism, then $\phi(M^\times)=N^\times$.
\item If $M$ and $N$ have absorbing elements $0_M$ and $0_N$, and $0_N \in \operatorname{Im}(\phi)$, then $\phi(0_M)=0_N$.
\end{enumerate}
\end{lemma}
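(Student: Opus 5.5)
For part (1), I will argue directly from the homomorphism axioms. Let $u \in M^\times$ with inverse $v$, so that $u \star_M v = e_M = v \star_M u$. Applying $\phi$ and using that a monoid homomorphism preserves the operation and sends $e_M$ to $e_N$ gives
\[
\phi(u)\star_N\phi(v)=\phi(e_M)=e_N=\phi(v)\star_N\phi(u).
\]
Hence $\phi(u)\in N^\times$, and therefore $\phi(M^\times)\subseteq N^\times$.

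Now suppose $\phi$ is an automorphism (more generally, a bijective homomorphism). The inverse map $\phi^{-1}$ is again a monoid homomorphism: it preserves the operation because $\phi$ is bijective and multiplicative, and $\phi^{-1}(e_N)=e_M$. Applying the inclusion just proved to $\phi^{-1}$ gives $\phi^{-1}(N^\times)\subseteq M^\times$, that is, $N^\times\subseteq\phi(M^\times)$. Together with the first inclusion, this yields $\phi(M^\times)=N^\times$.

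For part (2), choose $m\in M$ with $\phi(m)=0_N$, which exists because $0_N\in\operatorname{Im}(\phi)$. Since $0_M$ is absorbing, $0_M=0_M\star_M m$. Therefore
\[
\phi(0_M)=\phi(0_M)\star_N\phi(m)=\phi(0_M)\star_N 0_N=0_N,
\]
where the last equality holds because $0_N$ is absorbing in $N$.

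No step here is a real obstacle. The only point that needs care is that ``homomorphism'' must mean a monoid homomorphism, so that $\phi(e_M)=e_N$ holds. Without this, part (1) can fail; for example, the zero map sends every unit to $0_N$.
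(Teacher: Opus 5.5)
Your proof is correct, and it is the direct verification from the definitions that the paper has in mind; the paper omits the proof as straightforward and defers to its earlier work. In particular, your argument for part (2) is the same absorbing-element computation the paper itself uses in the step $(b)\implies(c)$ of Proposition~\ref{Prop21}(3).
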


\begin{definition} \label{def:default_maps}
    Let $(M,\star_M,e_M)$ and $(N,\star_N,e_N)$ be monoids.
    Then we define the constant `zero' monoid morphism $\ee{M}{N} : M \to N, \ x \mapsto e_N$ and the identity morphism $\id{M} : M \to M, \ x \mapsto x$.
\end{definition}

\begin{definition}
Let $(M,\star_M,e_M)$ be a monoid.
Its center is
\[
Z(M) \coloneq  \{ m \in M \mid m \star_M n = n \star_M m \text{ for all } n \in M \}.
\]
\end{definition}

\begin{lemma}\label{Lem2110}
Let $(M,\star_M,e_M)$ be a monoid.
Then:
\begin{enumerate}
\item $Z(M)$ is a submonoid of $M$;
\item $Z(M)^\times = M^\times \cap Z(M)$.
\end{enumerate}
\end{lemma}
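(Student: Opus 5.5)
Both parts reduce to unwinding the definition of $Z(M)$ together with the monoid axioms. No earlier result is needed, beyond recalling that $M^\times$ denotes the group of invertible elements of $M$.

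For (1), I first check that $e_M \in Z(M)$: since $e_M$ is the identity, $e_M \star_M n = n = n \star_M e_M$ for every $n \in M$. Next I show closure. Take $m, m' \in Z(M)$ and $n \in M$. Using associativity and the centrality of $m'$ and then of $m$, I get
\[
(m \star_M m') \star_M n = m \star_M (n \star_M m') = (n \star_M m) \star_M m' = n \star_M (m \star_M m').
\]
Hence $m \star_M m' \in Z(M)$, so $Z(M)$ is a submonoid of $M$.

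For (2), the inclusion $Z(M)^\times \subseteq M^\times \cap Z(M)$ is immediate. An element invertible inside the submonoid $Z(M)$ is in particular invertible in $M$, and it lies in $Z(M)$ by assumption.

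For the reverse inclusion, let $m \in M^\times \cap Z(M)$ with inverse $m^{-1}$ in $M$. The only real point is to show $m^{-1} \in Z(M)$. Once that holds, $m$ has an inverse inside $Z(M)$, because the inverse equations $m \star_M m^{-1} = e_M = m^{-1} \star_M m$ hold in $Z(M)$ too, and $e_M \in Z(M)$ by (1). To show centrality of $m^{-1}$, fix $n \in M$ and start from $m \star_M n = n \star_M m$. Multiplying on the left and on the right by $m^{-1}$ and using associativity gives
\[
n \star_M m^{-1} = m^{-1} \star_M (m \star_M n) \star_M m^{-1} = m^{-1} \star_M (n \star_M m) \star_M m^{-1} = m^{-1} \star_M n.
\]

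I expect no genuine obstacle. The only step needing care is this last one, showing that the inverse of a central unit is central, where the multiplication by $m^{-1}$ on both sides must be written out explicitly.
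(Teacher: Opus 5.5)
Your proof is correct and follows the same route as the paper. In both, the only substantive step is that the inverse of a central unit is central: the paper reads this off directly from $xm = mx$, while you verify it explicitly by multiplying by $m^{-1}$ on both sides, and your check of part (1), which the paper calls clear, is also fine.
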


\begin{proof}
(1) is clear.
For (2), let $x\in M^\times\cap Z(M)$, and let $x^{-1}$ be its inverse.
For every $m\in M$, from $xm=mx$ we obtain $m x^{-1}=x^{-1}m$, so $x^{-1}\in Z(M)$, and so $x\in Z(M)^\times$.
The reverse inclusion is immediate, since $Z(M)^\times \subseteq M^\times$ and $Z(M)^\times \subseteq Z(M)$.
\end{proof}
\noindent

\subsection*{Matrix description of endomorphisms}

The study of automorphisms of direct products has been extensively developed in group theory, notably in the work of J.
N.
S.
Bidwell.
In particular, in \cite{bidwell2008automorphisms}, Bidwell described the endomorphism monoid of a direct product of $n$ finite groups in terms of matrices whose entries are homomorphisms between the factors.

This matrix perspective extends naturally to the setting of monoids.
We now recall the corresponding construction for finite direct products of monoids, following \cite[Section~2]{zbMATH08024120}.
\medskip

In order to express matrix multiplication explicitly, we extend the product of a monoid to maps in a pointwise manner.
Let $(M,\star_M,e_M)$ be a monoid, and let $f_1,\dots,f_n : X \to M$ be maps.
We define
\[ \overset{\bigstar}{\scriptscriptstyle M}_{{k}=1}^{n} f_k : X \longrightarrow M
\qquad
\text{by}
\qquad
\left(\overset{\bigstar}{\scriptscriptstyle M}_{{k}=1}^{n} f_k\right)(x)
=
f_1(x)\star_M \dotsb \star_M f_n(x),
\qquad x \in X.
\]
This operation is well-defined by associativity of $\star_M$.

\medskip

We also recall the following notation for commutation.
Let $A,B \subseteq M$.
We set
\[
Z(A,B)
\coloneq 
\{(a,b) \in A \times B \mid a \star_M b = b \star_M a\}.
\]
Thus $Z(A,B)=A\times B$ if and only if every element of $A$ commutes with every element of $B$.
\medskip

\begin{definitionlemma}\cite[Section 2]{zbMATH08024120}\label{DefLem:n-fold-M}
Let $(M \coloneq  M_1 \times \dotsm \times M_n, \star)$ be a finite direct product of monoids.
Define
\[
\mathcal{M}(M)
=
\left\{
(\alpha_{ij})_{1 \le i,j \le n}
~\middle|
\begin{array}{l}
\alpha_{ij} \in \Hom(M_j, M_i),\\[0.3em]
Z(\operatorname{Im}(\alpha_{ij_1}), \operatorname{Im}(\alpha_{ij_2}))
=
\operatorname{Im}(\alpha_{ij_1}) \times \operatorname{Im}(\alpha_{ij_2}),\\[0.3em]
\forall\, i,\; j_1 \neq j_2
\end{array}
\right\}.
\]
We equip $\mathcal{M}(M)$ with the binary operation $\odot$ defined by
\[
(\alpha \odot \alpha')_{ij}
=
\str{i}{k} \alpha_{ik} \circ \alpha'_{kj}.
\]
This operation is well-defined thanks to the centrality condition on the images.

Then:
\begin{enumerate}
\item $(\mathcal{M}(M), \odot)$ is a monoid with identity element
\[
I_n = (\varepsilon_{ij})_{1\le i,j \le n},
\quad
\varepsilon_{ij} =
\begin{cases}
\id{M_i}, & i=j,\\
\ee{M_j}{M_i}, & i\neq j;
\end{cases}
\]

\item for all $\alpha \in \mathcal{M}(M)$, all $m=(m_1,\dots,m_n)$, $m'=(m'_1,\dots,m'_n)\in M$, and all $i \in \llbracket 1,n \rrbracket$, we have
\begin{equation}\label{commutativity}
\str{i}{k} \alpha_{ik}(m_k \star_{M_k} m'_k)
=
\left(\str{i}{k} \alpha_{ik}(m_k)\right)
\star_{M_i}
\left(\str{i}{k} \alpha_{ik}(m'_k)\right).
\end{equation}
\end{enumerate}
\end{definitionlemma}

For the proof, one may consult \cite[Proposition~1.3]{zbMATH03780218} and adapt the argument to the monoid setting.

\medskip

The next theorem shows that this matrix construction captures all endomorphisms of $M$.

\begin{theorem}\cite[Proposition 2.1]{zbMATH08024120}\label{Theo252}
Let $(M \coloneq  M_1 \times \dotsm \times M_n, \star)$ be a finite direct product of monoids.
The map
\[
\begin{array}{cccc}
\Psi: & \End(M,\star) & \longrightarrow & \mathcal{M}(M) \\
& \theta & \longmapsto & (\theta_{ij})
\end{array}
\]
is a monoid isomorphism, where for all $i,j \in \llbracket 1,n \rrbracket$,
\[
\theta_{ij} \coloneq  \pi_i \circ \theta \circ \iota_j.
\]

Here, $\iota_j: M_j \to M$ and $\pi_i: M \to M_i$ denote the canonical injection and projection, respectively.

The inverse map is given by
\[
\Psi^{-1}((\theta_{ij}))(m)
=
\bigl(\theta_1(m),\dots,\theta_n(m)\bigr),
\quad
\text{where}
\quad
\theta_i(m) \coloneq \str{i}{k} \theta_{ik}(m_k)
\quad\text{for}\ m=(m_1,\dots,m_n)\in M.
\]
\end{theorem}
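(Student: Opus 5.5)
We must show three things: $\Psi$ is well defined, meaning $\Psi(\theta)$ lies in $\mathcal{M}(M)$; $\Psi$ is a monoid homomorphism, $\Psi(\theta\circ\theta')=\Psi(\theta)\odot\Psi(\theta')$ and $\Psi(\id{M})=I_n$; and $\Psi$ is bijective with the stated inverse. The basic tool is the decomposition of any $m=(m_1,\dots,m_n)\in M$ as the product
\[
m=\iota_1(m_1)\star\dotsb\star\iota_n(m_n),
\]
where the factors $\iota_j(m_j)$ commute pairwise in $M$.

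First, well-definedness. Each $\theta_{ij}=\pi_i\circ\theta\circ\iota_j$ is a composite of monoid homomorphisms, so $\theta_{ij}\in\Hom(M_j,M_i)$. For the centrality condition, fix $j_1\neq j_2$ and elements $a\in M_{j_1}$, $b\in M_{j_2}$. Then $\iota_{j_1}(a)\star\iota_{j_2}(b)=\iota_{j_2}(b)\star\iota_{j_1}(a)$ in $M$. Applying the homomorphism $\pi_i\circ\theta$ to both sides gives $\theta_{ij_1}(a)\star_{M_i}\theta_{ij_2}(b)=\theta_{ij_2}(b)\star_{M_i}\theta_{ij_1}(a)$. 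Hence the images commute elementwise, and $\Psi(\theta)\in\mathcal{M}(M)$.

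Second, the reconstruction formula. Applying $\theta$ to the decomposition of $m$ and projecting with $\pi_i$, we get, since $\pi_i\circ\theta$ is a homomorphism,
\[
\pi_i\theta(m)=\str{i}{k}\theta_{ik}(m_k).
\]
This shows $\theta$ is determined by its matrix, so $\Psi$ is injective, and that the displayed inverse formula holds on the image.

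Third, surjectivity. Given $\alpha\in\mathcal{M}(M)$, define $\theta(m)$ componentwise by $\theta_i(m)=\str{i}{k}\alpha_{ik}(m_k)$.
\begin{itemize}
\item $\theta$ is multiplicative: this is exactly identity~\eqref{commutativity} of Definition-Lemma~\ref{DefLem:n-fold-M}, applied in each component.
\item $\theta$ is unital: $\theta(e_M)=e_M$ because each $\alpha_{ik}$ is unital.
\item Its matrix is $\alpha$: compute $\pi_i\theta\iota_j(x)$. Every factor with $k\neq j$ is $\alpha_{ik}(e_{M_k})=e_{M_i}$, so $\pi_i\theta\iota_j(x)=\alpha_{ij}(x)$.
\end{itemize}

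Fourth, the homomorphism property. Expanding $\theta'(\iota_j(x))$ as a product of the elements $\iota_k(\theta'_{kj}(x))$ and applying $\pi_i\circ\theta$ gives
\[
(\theta\circ\theta')_{ij}=\str{i}{k}\theta_{ik}\circ\theta'_{kj}=(\Psi(\theta)\odot\Psi(\theta'))_{ij}.
\]
For the identity, $\pi_i\iota_j$ is $\id{M_i}$ when $i=j$ and the constant map $\ee{M_j}{M_i}$ otherwise, so $\Psi(\id{M})=I_n$. The identity element is already established in Definition-Lemma~\ref{DefLem:n-fold-M}.

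No step is deep. The only real care is needed in the surjectivity step, where the non-commutative reordering must be justified, and this is precisely what the centrality condition and \eqref{commutativity} provide.
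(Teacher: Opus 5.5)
Your proof is correct and is the standard argument the paper defers to: the paper gives no proof of its own, only citing Brescia and asking the reader to adapt \cite[Lemma~2.1]{bidwell2006automorphisms} and \cite[Proposition~1.4]{zbMATH03780218} to monoids, which is what you do by decomposing $m=\iota_1(m_1)\star\dotsb\star\iota_n(m_n)$, reading off the centrality condition from the commuting factors $\iota_j(m_j)$, and using identity~\eqref{commutativity} for surjectivity. You also correctly supply the one monoid-specific check absent in the group case, namely that the reconstructed map preserves $e_M$ because each $\alpha_{ik}$ is unital.
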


For the proof, see \cite[Lemma~2.1]{bidwell2006automorphisms} and \cite[Proposition~1.4]{zbMATH03780218} in the group case, and adapt the argument to the monoid setting.

We conclude this section with a simple identity that will be used repeatedly in the sequel.

\begin{lemma}\label{Lem33}
Let \((M\coloneq M_1 \times \dotsm \times M_n, \star)\) be a finite direct product of monoids.
Then
\[
\str{}{\ell} \iota_{\ell}\pi_{\ell}=\id{M},
\]
where \(\pi_{\ell}\) and \(\iota_{\ell}\) are as defined in Theorem~\ref{Theo252} for all \(\ell\in \llbracket 1, n\rrbracket.\)
\end{lemma}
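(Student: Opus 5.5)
The identity is an equality of maps $M\to M$, so I will prove it by evaluating both sides at an arbitrary element $m=(m_1,\dots,m_n)\in M$ and comparing coordinates. The left-hand side is the pointwise product in $M$, as defined just before Definition-Lemma~\ref{DefLem:n-fold-M}, of the maps $\iota_\ell\pi_\ell : M\to M$. Evaluating it at $m$ gives
\[
\Bigl(\str{}{\ell}\iota_\ell\pi_\ell\Bigr)(m) = \iota_1(m_1)\star\iota_2(m_2)\star\dotsb\star\iota_n(m_n).
\]
This uses $\pi_\ell(m)=m_\ell$.

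Next I unfold the canonical injections. The element $\iota_\ell(m_\ell)$ is the tuple whose $\ell$-th coordinate is $m_\ell$ and whose $k$-th coordinate is $e_{M_k}$ for $k\neq\ell$. Multiplication in the direct product is coordinatewise. Hence the $k$-th coordinate of the product above is
\[
e_{M_k}\star_{M_k}\dotsb\star_{M_k} m_k\star_{M_k}\dotsb\star_{M_k} e_{M_k}.
\]
In this product, $m_k$ sits in position $k$ and every other factor is the identity. It equals $m_k$ because $e_{M_k}$ is a two-sided identity; associativity makes the bracketing irrelevant.

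Therefore the product is $(m_1,\dots,m_n)=m=\id{M}(m)$, which gives the equality of maps. No commutativity is needed: in each coordinate there is only one non-identity factor, so the order of the factors does not matter.

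There is no real obstacle here. The only care needed is to read the notation $\str{}{\ell}$ correctly as the pointwise product in the monoid $M$ itself, and to keep the fixed order $\ell=1,\dots,n$.
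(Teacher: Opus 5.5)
Your proof is correct and is the same argument as the paper's: evaluate at an arbitrary $m$, note that $\iota_\ell\pi_\ell(m)$ has $m_\ell$ in position $\ell$ and identity elements elsewhere, and multiply coordinatewise. Your remark that no commutativity is needed, because each coordinate has only one non-identity factor, is a useful detail that the paper leaves implicit.
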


\begin{proof}
Let \(m=(m_1,\dots,m_n)\in M\).
For each \(\ell\in \llbracket 1,n\rrbracket\), we have
\[
\iota_\ell\pi_\ell(m)
=
(\e{M_1},\dots,\e{M_{\ell-1}},m_\ell,\e{M_{\ell+1}},\dots,\e{M_n}).
\]
Taking the product over all \(\ell\), we obtain the result.
\end{proof}

\section{Endomorphisms of direct products of monoids as matrices}\label{Sec23}

\subsection{Characterization of trivial morphisms between monoids}\label{Subsec21}

We conclude this section with a structural result describing multiplicative homomorphisms between rings, and in particular their interaction with zero divisors.

\begin{proposition}\label{Prop21}
Let $(R,+,\cdot)$ and $(S,+,\cdot)$ be rings, and let
\[
\psi \in \Hom\big((R,\cdot),(S,\cdot)\big)
\]
be a multiplicative monoid homomorphism.
Then the following hold:

\begin{enumerate}
\item The following are equivalent:
\begin{enumerate}
    \item[(a)] $\psi = \ee{R}{S}$;
    \item[(b)] $\psi(0_R)=1_S$;
    \item[(c)] $\operatorname{Im}(\psi)$ contains no zero divisors.
\end{enumerate}

\item The following are equivalent:
\begin{enumerate}
    \item[(a)] $\psi \neq \ee{R}{S}$;
    \item[(b)] $\psi(0_R)\in \mathcal{Z}_S$.
\end{enumerate}

\item Assume in addition that $S$ is a $D$-ring.
Then the following are equivalent:
\begin{enumerate}
    \item[(a)] $\psi \neq \ee{R}{S}$;
    \item[(b)] $\operatorname{Im}(\psi)$ contains a zero divisor;
    \item[(c)] $\psi(0_R)=0_S$;
    \item[(d)] $\psi(\mathcal{Z}_R)\subseteq \mathcal{Z}_S$.
\end{enumerate}
\end{enumerate}
\end{proposition}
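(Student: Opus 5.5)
The whole statement rests on one observation about $f\coloneq\psi(0_R)$. Since $0_R\cdot 0_R=0_R$ and $0_R\cdot x=0_R$ for all $x\in R$, multiplicativity gives
\[
f^2=f \qquad\text{and}\qquad f\,\psi(x)=f \quad\text{for all } x\in R .
\]
So $f$ is an idempotent of $S$ that absorbs the whole image of $\psi$. In particular $f(1_S-f)=0_S$, so $f\in\mathcal{Z}_S$ as soon as $f\neq 1_S$. Conversely, $1_S$ is never a zero divisor, since $1_S\cdot t=0$ forces $t=0$. I will also use that $\psi(1_R)=1_S$, because $\psi$ is a monoid homomorphism.

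For (1): (a)$\Rightarrow$(b) is immediate. For (b)$\Rightarrow$(a), if $f=1_S$ then $\psi(x)=\psi(x)\cdot 1_S=\psi(x)f=f=1_S$ for every $x$. For (a)$\Rightarrow$(c), the image is $\{1_S\}$, which contains no zero divisor. For (c)$\Rightarrow$(b), $f$ lies in the image and is not a zero divisor, so the remark above forces $f=1_S$.

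For (2): if $\psi\neq\ee{R}{S}$, then $f\neq 1_S$ by (1), and $f(1_S-f)=0$ with $1_S-f\neq0$ gives $f\in\mathcal{Z}_S$. Conversely, if $f\in\mathcal{Z}_S$ then $f\neq 1_S$, so $\psi\neq\ee{R}{S}$ by (1).

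For (3), assume $S$ is a nonzero $D$-ring; the zero ring must be excluded, otherwise (c) holds trivially while (a) fails. The chain of implications is as follows.
\begin{itemize}
\item (a)$\Rightarrow$(c): by (2), $f\in\mathcal{Z}_S$, so $f$ is nilpotent. Being also idempotent, $f=f^k=0_S$.
\item (c)$\Rightarrow$(b): $0_S$ lies in the image and is a zero divisor, witnessed by $t=1_S\neq0$.
\item (b)$\Rightarrow$(a): this is the contrapositive of (1)(a)$\Rightarrow$(c).
\item (d)$\Rightarrow$(a): since $0_R\in\mathcal{Z}_R$, we get $f\in\mathcal{Z}_S$, and (2) applies.
\end{itemize}

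The step I expect to be the real obstacle is (a)$\Rightarrow$(d). The naive approach fails. From $zt=0$ with $t\neq0$ one only gets $\psi(z)\psi(t)=0_S$, and $\psi(t)$ may vanish. Indeed, for $R=k\times k$, $S=k$ and $\psi(x,y)=y$, the zero divisor $(0,1)$ is sent to the unit $1$. So this implication needs an extra hypothesis, and the natural one in the paper's setting is that $R$ is also a $D$-ring.

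Under that hypothesis the argument goes through. Take $z\in\mathcal{Z}_R$; it is nilpotent, say $z^k=0$. Then $\psi(z)^k=\psi(0_R)=0_S$ by (c), so $\psi(z)$ is nilpotent. If $\psi(z)=0_S$, it is a zero divisor via $t=1_S$. Otherwise choose $m\ge1$ minimal with $\psi(z)^{m+1}=0$; then $\psi(z)\cdot\psi(z)^m=0$ with $\psi(z)^m\neq0$, so $\psi(z)\in\mathcal{Z}_S$. I would state (d) with this hypothesis on $R$, or restrict it to this case, when writing the proof.
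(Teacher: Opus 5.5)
Your proposal is correct, and your objection to (3)(a)$\Rightarrow$(d) is justified. With only $S$ assumed to be a $D$-ring the item is false, and $R=k\times k$, $S=k$, $\psi(x,y)=y$ is a genuine counterexample, since $(0,1)\in\mathcal{Z}_R$ is sent to the unit $1$. The paper's proof of (c)$\Rightarrow$(d) begins with ``if $z\in\mathcal{Z}_R$, then $z^k=0_R$ for some $k$''. That is exactly the missing hypothesis that $R$ is a $D$-ring. Its next step, ``$\psi(z)^k=0_S$, so $\psi(z)\in\mathcal{Z}_S$'', also needs $S\neq 0$, as you noticed.

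The paper proves (3) as a single cycle (a)$\Rightarrow$(b)$\Rightarrow$(c)$\Rightarrow$(d)$\Rightarrow$(a), so its argument for (c)$\Rightarrow$(a) also passes through the faulty step. Your ordering avoids this: you get (c)$\Rightarrow$(b) directly from $0_S\in\operatorname{Im}(\psi)$ and (b)$\Rightarrow$(a) from (1), so (a)$\Leftrightarrow$(b)$\Leftrightarrow$(c) is proved independently of (d). That is all the rest of the paper needs. Lemmas~\ref{Lem:idempotent}, \ref{Lem:Dring_zero}, \ref{Lem331} and Theorem~\ref{Theo35} only invoke (1), (2) and (3)(a)$\Rightarrow$(c), always for $D$-rings, so your correction costs nothing downstream.

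One edge case is shared by you and the paper. The step (d)$\Rightarrow$(a) uses $0_R\in\mathcal{Z}_R$, which needs $R\neq 0$. For $R=0$ the only homomorphism is $\ee{R}{S}$, yet (d) holds vacuously because $\mathcal{Z}_R=\emptyset$. So your amended item should also assume $R\neq 0$.

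Otherwise the two routes differ only in packaging. You organize everything around the idempotent $f=\psi(0_R)$, which absorbs $\operatorname{Im}(\psi)$. You use $f(1_S-f)=0_S$ as the zero-divisor witness, and ``nilpotent and idempotent implies zero'' for (3)(a)$\Rightarrow$(c). The paper instead uses $\psi(0_R)(\psi(x)-1_S)=0_S$ for some $x$ with $\psi(x)\neq 1_S$. For (3)(b)$\Rightarrow$(c) it first produces $0_S=\psi(z^k)\in\operatorname{Im}(\psi)$ and then uses $\psi(z^k)\psi(0_R)=\psi(0_R)$. Your version is slightly shorter and makes visible why, over a nonzero $D$-ring, $\psi(0_R)$ can only be $1_S$ or $0_S$.
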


\begin{proof}
\begin{enumerate}
\item We show that $(a)\implies(b)\implies(c)\implies(a)$.

\begin{itemize}
\item $(a)\implies(b)$ is immediate.

\item $(b)\implies(c)$: if $\psi(0_R)=1_S$, then for all $x\in R$,
\[
\psi(x)=\psi(x)\psi(0_R)=\psi(x\cdot 0_R)=\psi(0_R)=1_S,
\]
so $\operatorname{Im}(\psi)=\{1_S\}$ contains no zero divisors.

\item $(c)\implies(a)$: for all $x\in R$, $\psi(0_R)\psi(x)=\psi(0_R)$, hence $\psi(0_R)(\psi(x)-1_S)=0_S$.
Since $\operatorname{Im}(\psi)$ contains no zero divisors, we must have $\psi(x)=1_S$ for all $x$, hence $\psi=\ee{R}{S}$.
\end{itemize}

\item We show $(a)\iff(b)$.

\begin{itemize}
\item $(a)\implies(b)$: if $\psi\neq \ee{R}{S}$, there exists $x$ such that $\psi(x)\neq 1_S$.
Then
\[
\psi(0_R)(\psi(x)-1_S)=0_S
\]
with $\psi(x)-1_S\neq 0_S$, so $\psi(0_R)\in \mathcal{Z}_S$.

\item $(b)\implies(a)$: if $\psi(0_R)$ is a zero divisor, then $\psi(0_R)\neq 1_S$, hence $\psi\neq \ee{R}{S}$.
\end{itemize}

\item Assume $S$ is a $D$-ring.
We show $(a)\implies(b)\implies(c)\implies(d)\implies(a)$.

\begin{itemize}
\item $(a)\implies(b)$ follows from part (2).

\item $(b)\implies(c)$: let $t=\psi(z)$ be a zero divisor in the image.
Since $S$ is a $D$-ring, $t^k=0_S$ for some $k$.
Hence
\(
\psi(z^k)=t^k=0_S,
\)
so $0_S\in \operatorname{Im}(\psi)$.
Then
\[
\psi(z^k)\psi(0_R)=\psi(0_R),
\]
which gives $0_S\cdot \psi(0_R)=\psi(0_R)$, hence $\psi(0_R)=0_S$.

\item $(c)\implies(d)$: if $z\in \mathcal{Z}_R$, then $z^k=0_R$ for some $k$, so
\(
\psi(z)^k=\psi(0_R)=0_S,
\)
so $\psi(z)\in \mathcal{Z}_S$.

\item $(d)\implies(a)$: since $0_R\in \mathcal{Z}_R$, we get $\psi(0_R)\in \mathcal{Z}_S$, hence $\psi\neq \ee{R}{S}$ by part (2).
\qedhere
\end{itemize}
\end{enumerate}
\end{proof}

\subsection{Characterisation of automorphisms of direct products of monoids}\label{Subsec232}
We now turn to the description of the automorphism group of the product of monoids.
By Theorem~\ref{Theo252}, every endomorphism of $M$ can be represented by a matrix whose entries are homomorphisms between the components.
In this framework, automorphisms correspond precisely to those endomorphisms whose associated matrices are invertible, that is, admit an inverse in $\mathcal{M}(M)$.
Our aim is therefore to make these invertibility conditions explicit.

We start by relating an automorphism \(\theta \in \Aut(M, \star)\) to its inverse \(\theta^{-1}\) through their matrix representations in \(\mathcal{M}(M)\).
The following result is a direct generalisation of \cite[Lemma 2.4]{zbMATH08024120}, and we provide a proof for completeness.

\begin{proposition}\label{Prop23}
Let \((M\coloneq M_1 \times \dotsm \times M_n, \star)\) be a direct product of monoids.
Then \(\theta \in \Aut(M, \star)\) if and only if there exists \((\theta'_{ij})_{1\leq i,j\leq n} \in \mathcal{M}(M)\) such that for all \(m=(m_1,\cdots,m_n)\in M\), and all \(i, j \in \llbracket 1,n \rrbracket\) with \(i\neq j\),
\begin{align*}
    (1)~\str{j}{\ell}\theta_{j\ell}\theta'_{\ell j} &= \id{M_j}, & (1')~\str{i}{\ell}\theta_{i\ell}\theta'_{\ell j} &= \ee{M_j}{M_i}, & (2)~\str{j}{\ell}\theta'_{j\ell}\theta_{\ell j} &= \id{M_j}, & (2')~\str{i}{\ell}\theta'_{i\ell}\theta_{\ell j} &= \ee{M_j}{M_i}.
\end{align*}
where the family $(\theta_{i,j})_{1 \leq i,j \leq n}$ is defined with respect to $\theta$ as in Theorem~\ref{Theo252}.
\end{proposition}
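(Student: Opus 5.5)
The plan is to reduce everything to the monoid isomorphism $\Psi\colon \End(M,\star)\to\mathcal{M}(M)$ of Theorem~\ref{Theo252}. Since $\Psi$ preserves products and identities, $\theta$ is invertible in $\End(M,\star)$ if and only if $\Psi(\theta)=(\theta_{ij})$ is invertible in $(\mathcal{M}(M),\odot)$, with identity $I_n$. The automorphisms of $M$ are exactly the units of $\End(M,\star)$: a bijective monoid homomorphism has a monoid-homomorphism inverse, and conversely a two-sided inverse in $\End$ makes $\theta$ bijective.

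So we will prove that $\theta\in\Aut(M,\star)$ if and only if there is $\theta'=(\theta'_{ij})\in\mathcal{M}(M)$ with
\[
\Psi(\theta)\odot\theta' = I_n \quad\text{and}\quad \theta'\odot\Psi(\theta)=I_n .
\]
The remaining work is to unfold these two matrix equations entrywise, using the definition of $\odot$ from Definition-Lemma~\ref{DefLem:n-fold-M}.

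The $(j,j)$ entry of $\Psi(\theta)\odot\theta'$ is $\str{j}{\ell}\theta_{j\ell}\theta'_{\ell j}$, and it must equal $\varepsilon_{jj}=\id{M_j}$; this is (1). The $(i,j)$ entry with $i\neq j$ must equal $\ee{M_j}{M_i}$; this is (1'). Symmetrically, $\theta'\odot\Psi(\theta)=I_n$ is equivalent to (2) and (2'). Each of these equalities is an equality of maps, so it holds pointwise for every $m\in M$ (in fact for every $m_j\in M_j$). This accounts for the quantifier over $m$ in the statement.

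For the forward direction, take $\theta'=\Psi(\theta^{-1})$. Then $\Psi(\theta)\odot\Psi(\theta^{-1})=\Psi(\theta\circ\theta^{-1})=\Psi(\id{M})=I_n$, and likewise in the other order.

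For the converse, given $\theta'\in\mathcal{M}(M)$ satisfying (1)–(2'), set $\phi=\Psi^{-1}(\theta')\in\End(M,\star)$. Then $\Psi(\theta\circ\phi)=\Psi(\theta)\odot\theta'=I_n=\Psi(\id{M})$. Injectivity of $\Psi$ gives $\theta\circ\phi=\id{M}$, and similarly $\phi\circ\theta=\id{M}$. Hence $\theta$ is bijective and therefore an automorphism.

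The only delicate point is checking that $\Psi$ is multiplicative with the correct composition order, i.e.\ that $(\theta\circ\phi)_{ij}=\str{i}{k}\theta_{ik}\phi_{kj}$. Theorem~\ref{Theo252} asserts this, but it can also be verified directly. Insert $\str{}{\ell}\iota_\ell\pi_\ell=\id{M}$ from Lemma~\ref{Lem33} between $\theta$ and $\phi$, then use that $\pi_i\circ\theta$ is a monoid homomorphism to split the product. The well-definedness of these products, meaning that the factors commute, is guaranteed by the centrality condition built into $\mathcal{M}(M)$.
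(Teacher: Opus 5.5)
Your proposal is correct and follows the paper's proof. For the forward direction the paper also takes $\theta'_{ij}=(\theta^{-1})_{ij}$, checks the entries by inserting $\str{}{\ell}\iota_\ell\pi_\ell=\id{M}$ (your verification that $\Psi$ is multiplicative), and for the converse reads (1)--(2') as $(\theta_{ij})\odot(\theta'_{ij})=I_n=(\theta'_{ij})\odot(\theta_{ij})$ and pulls invertibility back through the isomorphism $\Psi$. You additionally spell out two steps the paper leaves implicit: that the units of $\End(M,\star)$ are exactly the automorphisms, and that injectivity of $\Psi$ gives $\theta\circ\phi=\phi\circ\theta=\id{M}$.
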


\begin{proof}
Assume first that \(\theta \in \Aut(M,\star)\), and let
\((\theta_{ij})\) and \(((\theta^{-1})_{ij})\) be the matrices associated with \(\theta\) and \(\theta^{-1}\) as in Theorem~\ref{Theo252}.
Then
\[
\theta\theta^{-1}
=
\left(\str{i}{\ell}\theta_{i\ell}(\theta^{-1})_{\ell j}\right)_{i,j}.
\]
Using Lemma~\ref{Lem33}, we compute
\[
\str{i}{\ell}\theta_{i\ell}(\theta^{-1})_{\ell j}
=\str{i}{\ell}\pi_i\theta\iota_\ell\pi_\ell\theta^{-1}\iota_j
=\pi_i\theta\left(\str{}{\ell}\iota_\ell\pi_\ell\right)\theta^{-1}\iota_j
=\pi_i\theta\theta^{-1}\iota_j
=\pi_i\iota_j
=
\begin{cases}
\id{M_j}, & i=j,\\
\ee{M_j}{M_i}, & i\neq j.
\end{cases}
\]
A similar computation applied to \(\theta^{-1}\theta\) yields relations \((2)\) and \((2')\), so the required conditions hold with \(\theta'_{ij}\coloneq (\theta^{-1})_{ij}\).

Conversely, suppose there exists $(\theta'_{ij}) \in \mathcal{M}(M)$ satisfying relations $(1)$--$(2')$.
Then, by definition of \(\odot\), these identities exactly express that
\[
(\theta_{ij}) \odot (\theta'_{ij}) = I_n
\quad\text{and}\quad
(\theta'_{ij}) \odot (\theta_{ij}) = I_n.
\]
Thus $(\theta_{ij})$ is invertible in $\mathcal{M}(M)$, and since $\Psi$ is an isomorphism (Theorem~\ref{Theo252}), it follows that \(\theta \in \Aut(M,\star)\).
\end{proof}

Consequently, Lemma~\ref{Prop23} yields the following structural relations, which describe how units in the monoids $M_i$ arise from the matrix components of an automorphism.

\begin{lemma}\label{Lem24}
Let \((M\coloneq M_1 \times \dotsm \times M_n, \star)\) be a finite direct product of monoids, and let \(\theta \in \Aut(M, \star)\).
Let $(\theta_{ij})_{1 \leq i,j \leq n}$ (resp.
$((\theta^{-1})_{ij})_{1 \leq i,j \leq n}$) be the family associated with $\theta$ (resp.
$\theta^{-1}$) as in Theorem~\ref{Theo252}.

Then, for all \(m=(m_1,\dotsc,m_n)\in M\) and all \(i,j,k \in \llbracket 1,n \rrbracket\) with \(i \neq j\), we have:

\begin{enumerate}
\item \(\theta_{ik}(\theta^{-1})_{k j}(m_j)\in M_i^{\times}\), and
\[
\theta_{ik}(\theta^{-1})_{k j}(m_j)
=
\left[\overset{\bigstar}{\scriptscriptstyle M_i}_{\ell=1,\ell\neq k}^n
\theta_{i\ell}(\theta^{-1})_{\ell j}(m_j)\right]^{(-1)};
\]

\item \((\theta^{-1})_{ik}\theta_{k j}(m_j)\in M_i^{\times}\), and
\[
(\theta^{-1})_{ik}\theta_{k j}(m_j)
=
\left[\overset{\bigstar}{\scriptscriptstyle M_i}_{\ell=1,\ell\neq k}^n
(\theta^{-1})_{i\ell}\theta_{\ell j}(m_j)\right]^{(-1)}.
\]
\end{enumerate}

We distinguish between the notation $m^{(-1)}$, which denotes the inverse of an element $m$ in a monoid, and $\theta^{-1}$, which denotes the inverse of an automorphism.
\end{lemma}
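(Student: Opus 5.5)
We will derive both parts from relations $(1')$ and $(2')$ of Proposition~\ref{Prop23}, applied with $\theta'_{ij}\coloneq(\theta^{-1})_{ij}$. The proof of that proposition shows this choice satisfies the relations.

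Fix $i\neq j$ and $m_j\in M_j$. Relation $(1')$ evaluated at $m_j$ reads
\[
\overset{\bigstar}{\scriptscriptstyle M_i}_{\ell=1}^{n}\theta_{i\ell}(\theta^{-1})_{\ell j}(m_j)=e_{M_i}.
\]
Set $a_\ell\coloneq\theta_{i\ell}(\theta^{-1})_{\ell j}(m_j)\in M_i$.

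The first step uses the centrality condition in the definition of $\mathcal{M}(M)$. For $\ell_1\neq\ell_2$, the images $\operatorname{Im}(\theta_{i\ell_1})$ and $\operatorname{Im}(\theta_{i\ell_2})$ commute elementwise. Since $a_\ell\in\operatorname{Im}(\theta_{i\ell})$, the elements $a_1,\dots,a_n$ pairwise commute. Hence the product $a_1\star\dotsb\star a_n$ can be reordered freely as
\[
a_k\star\Bigl(\overset{\bigstar}{\scriptscriptstyle M_i}_{\ell\neq k}a_\ell\Bigr)=e_{M_i}=\Bigl(\overset{\bigstar}{\scriptscriptstyle M_i}_{\ell\neq k}a_\ell\Bigr)\star a_k .
\]

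The second step reads off the conclusion. The displayed identity says that $a_k$ has a two-sided inverse, so $a_k\in M_i^\times$. By uniqueness of inverses in a monoid, that inverse is $\bigl(\overset{\bigstar}{\scriptscriptstyle M_i}_{\ell\neq k}a_\ell\bigr)^{(-1)}$, and taking inverses of both sides gives the formula in~(1).

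Part~(2) follows in the same way from $(2')$, with the roles of $\theta$ and $\theta^{-1}$ exchanged. This requires the matrix of $\theta^{-1}$ to lie in $\mathcal{M}(M)$, which holds by Theorem~\ref{Theo252}.

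The only delicate point is the commutation step. Without it we would get only a one-sided inverse, and could not move $a_k$ to either end of the product. The centrality condition built into $\mathcal{M}(M)$ gives exactly what is needed.
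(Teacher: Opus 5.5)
Your proof is correct and follows the same route as the paper: evaluate $(1')$, respectively $(2')$, of Proposition~\ref{Prop23} at $m_j$ with $\theta'=\theta^{-1}$, and isolate the $k$-th factor. Your explicit appeal to the centrality condition in $\mathcal{M}(M)$, used to reorder the factors and obtain a two-sided inverse, justifies a step the paper passes over. The paper implicitly reorders the product and then infers invertibility from a right inverse alone, which is not valid in a general monoid.
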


\begin{proof}
Fix \(i,j,k \in \llbracket 1,n \rrbracket\) with \(i \neq j\), and let \(m_j \in M_j\).

\begin{enumerate}
\item From equation \((1')\) of Lemma~\ref{Prop23}, we have
\[
\str{i}{\ell}\theta_{i\ell}(\theta^{-1})_{\ell j}(m_j)=\e{M_i}.
\]
Separating the term corresponding to \(k\), we obtain
\[
\e{M_i}
=
\theta_{ik}(\theta^{-1})_{k j}(m_j)
\st{M_i}
\left[\overset{\bigstar}{\scriptscriptstyle M_i}_{\ell=1,\ell\neq k}^n
\theta_{i\ell}(\theta^{-1})_{\ell j}(m_j)\right].
\]
Thus \(\theta_{ik}(\theta^{-1})_{k j}(m_j)\) admits a right inverse, hence lies in \(M_i^\times\), and the stated identity follows.

\item The proof is entirely similar, using equation \((2')\) of Lemma~\ref{Prop23} instead of \((1')\). \qedhere
\end{enumerate}
\end{proof}

In the preceding results, we have shown that the unit elements of the monoids \(M_i\), for \(i \in \llbracket 1,n \rrbracket\), can be described in terms of compositions of homomorphisms between the components.
We now make this interaction more precise by relating the diagonal and off-diagonal components of an automorphism.

\begin{lemma}\label{Lem25}
Let \( (M \coloneq  M_1 \times \dotsm \times M_n, \star) \) be a finite direct product of monoids, and let \( \theta \in \Aut(M, \star) \).
Let $(\theta_{ij})_{1 \leq i,j \leq n}$ (resp.
$((\theta^{-1})_{ij})_{1 \leq i,j \leq n}$) be the family associated with $\theta$ (resp.
$\theta^{-1}$) as in Theorem~\ref{Theo252}, and fix \(i,j \in \llbracket 1,n \rrbracket\) with $i \neq j$.

Then the following hold:
\begin{enumerate}
    \item \label{Lem25:item1} If \( \theta_{ii} \in \Aut(M_i, \st{M_i}) \), then
    \[
        \theta_{ij} \in \Hom(M_j, Z(M_i))
        \quad \text{and} \quad
        (\theta^{-1})_{ij} \in \Hom(M_j, M_i^{\times}).
    \]
    
    \item If, moreover, both \( \theta_{ii} \) and \( (\theta^{-1})_{ii} \) are automorphisms of \( M_i \), then
    \[
        \theta_{ij} \in \Hom(M_j, Z(M_i)^{\times})
        \quad \text{and} \quad
        (\theta^{-1})_{ij} \in \Hom(M_j, Z(M_i)^{\times}).
    \]
\end{enumerate}
\end{lemma}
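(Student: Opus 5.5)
The plan is to exploit the commutation constraint built into $\mathcal{M}(M)$ and the unit relations of Lemma~\ref{Lem24}.

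\emph{Part (1), first claim.} Since $\Psi(\theta)=(\theta_{ij})\in\mathcal{M}(M)$, the definition of $\mathcal{M}(M)$ with $j_1=i$ and $j_2=j\neq i$ gives that every element of $\operatorname{Im}(\theta_{ij})$ commutes with every element of $\operatorname{Im}(\theta_{ii})$. If $\theta_{ii}\in\Aut(M_i,\st{M_i})$, then $\operatorname{Im}(\theta_{ii})=M_i$. Hence $\operatorname{Im}(\theta_{ij})\subseteq Z(M_i)$, and so $\theta_{ij}\in\Hom(M_j,Z(M_i))$, because $Z(M_i)$ is a submonoid by Lemma~\ref{Lem2110}.

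\emph{Part (1), second claim.} I would use Lemma~\ref{Lem24}(1) with $k=i$. For every $m_j\in M_j$ it gives $\theta_{ii}(\theta^{-1})_{ij}(m_j)\in M_i^\times$. Since $\theta_{ii}$ is an automorphism, Lemma~\ref{Lem227}(1) applied to $\theta_{ii}^{-1}$ gives $\theta_{ii}^{-1}(M_i^\times)= M_i^\times$. Applying $\theta_{ii}^{-1}$ then yields $(\theta^{-1})_{ij}(m_j)\in M_i^\times$, so $(\theta^{-1})_{ij}\in\Hom(M_j,M_i^\times)$.

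\emph{Part (2).} Now apply part (1) symmetrically. The inverse $\theta^{-1}$ is also an automorphism, its diagonal entry $(\theta^{-1})_{ii}$ is an automorphism, and its inverse is $\theta$.
\begin{itemize}
\item Part (1) for $\theta$ gives $\operatorname{Im}(\theta_{ij})\subseteq Z(M_i)$ and $\operatorname{Im}((\theta^{-1})_{ij})\subseteq M_i^\times$.
\item Part (1) for $\theta^{-1}$ gives $\operatorname{Im}((\theta^{-1})_{ij})\subseteq Z(M_i)$ and $\operatorname{Im}(\theta_{ij})\subseteq M_i^\times$.
\end{itemize}
Intersecting these inclusions and using Lemma~\ref{Lem2110}(2), namely $Z(M_i)^\times=M_i^\times\cap Z(M_i)$, both maps land in $Z(M_i)^\times$.

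The one step to check carefully is the transfer from $\theta^{-1}$ to $\theta$. We need the matrix of $\theta^{-1}$ to lie in $\mathcal{M}(M)$, which holds by Theorem~\ref{Theo252}. We also need Lemma~\ref{Lem24} to apply with the roles of $\theta$ and $\theta^{-1}$ swapped. This is exactly its part (2), or equivalently part (1) applied to $\theta^{-1}$. Beyond this, the argument is bookkeeping.
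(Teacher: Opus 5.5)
Your proposal is correct and follows the paper's own proof. It uses the commutation condition in $\mathcal{M}(M)$ with $\operatorname{Im}(\theta_{ii})=M_i$ for centrality, then Lemma~\ref{Lem24} with $k=i$ and the invertibility of $\theta_{ii}$ for the unit claim, and finally applies part (1) to $\theta^{-1}$ and intersects via Lemma~\ref{Lem2110}. Your explicit appeal to Lemma~\ref{Lem227} for $\theta_{ii}^{-1}$ simply spells out the paper's remark that $\theta_{ii}$ ``preserves invertibility.''
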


\begin{proof}
Assume first that \( \theta_{ii} \in \Aut(M_i, \st{M_i}) \).
By Definition–Lemma~\ref{DefLem:n-fold-M}, we have
\[
Z(\operatorname{Im}(\theta_{ii}), \operatorname{Im}(\theta_{ij}))
=
\operatorname{Im}(\theta_{ii}) \times \operatorname{Im}(\theta_{ij}).
\]
Since \(\operatorname{Im}(\theta_{ii}) = M_i\), it follows that $\operatorname{Im}(\theta_{ij}) \subseteq Z(M_i)$, and hence \(\theta_{ij} \in \Hom(M_j, Z(M_i))\).

Next, by Lemma~\ref{Lem24}, for all \(m_j \in M_j\),
\[
\theta_{ii}\bigl((\theta^{-1})_{ij}(m_j)\bigr) \in M_i^\times.
\]
Since \(\theta_{ii}\) is an automorphism, it preserves invertibility, and therefore $(\theta^{-1})_{ij}(m_j) \in M_i^\times$.
Thus \((\theta^{-1})_{ij} \in \Hom(M_j, M_i^\times)\), which proves item \ref{Lem25:item1}.

Assume now that both \( \theta_{ii} \) and \( (\theta^{-1})_{ii} \) are automorphisms of \(M_i\).
Applying the same argument to \(\theta^{-1}\), we obtain
\[
(\theta^{-1})_{ij} \in \Hom(M_j, Z(M_i))
\quad \text{and} \quad
\theta_{ij} \in \Hom(M_j, M_i^\times).
\]
Combining these inclusions with item \ref{Lem25:item1}, it follows by Lemma~\ref{Lem2110} that both maps take values in
\[
Z(M_i) \cap M_i^\times = Z(M_i)^\times. \qedhere
\]
\end{proof}

\section{Rigidity conditions for automorphisms of direct products}\label{Sec3}

In this section, we establish a rigidity phenomenon for automorphisms of direct products of multiplicative monoids arising from a broad class of rings.
Our goal is to understand how the components of an automorphism interact across different factors, and to show that, under suitable structural assumptions, no non-trivial interaction between distinct components can occur.

Although the motivating examples arise from $p$-power modular monoids, the results are formulated and proved in a more general setting, which applies to a wider class of rings, including finite $D$-rings.
In particular, the arguments developed here do not rely on specific arithmetic properties, but rather on general structural features such as the behaviour of zero divisors and the invertibility of non-zero divisors.

\subsection{Key lemmas}\label{Sec51}

We begin with two structural lemmas that form the core of the rigidity argument.
They show that, in the presence of invertibility constraints, the off-diagonal components of an automorphism are forced to be trivial.
\begin{lemma}\label{Lem:idempotent}
Let \(R_i\) be rings for all \(i\in \llbracket 1,n\rrbracket\), and let \(\theta \in \Aut(R_1\times \dotsm \times R_n, \star)\) be a multiplicative monoid automorphism.
Then, for all \(i, j, k \in \llbracket 1,n\rrbracket\) with \(i\neq j\), we have
\[
\theta_{ik}(\theta^{-1})_{k j} = \ee{R_j}{R_i}
\qquad
\text{and}
\qquad
(\theta^{-1})_{ik}\theta_{k j} = \ee{R_j}{R_i},
\]
where the family $(\theta_{ij})_{1 \leq i,j \leq n}$ is defined with respect to $\theta$ as in Theorem~\ref{Theo252}.
\end{lemma}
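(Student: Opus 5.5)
The plan is to reduce the statement to the triviality criterion of Proposition~\ref{Prop21}(1), fed by the unit-valuedness supplied by Lemma~\ref{Lem24}. Fix \(i,j,k\in\llbracket 1,n\rrbracket\) with \(i\neq j\), and set
\[
\psi \coloneq \theta_{ik}(\theta^{-1})_{kj}\colon R_j\longrightarrow R_i .
\]
By Theorem~\ref{Theo252}, both \(\theta_{ik}\in\Hom(R_k,R_i)\) and \((\theta^{-1})_{kj}\in\Hom(R_j,R_k)\) are multiplicative monoid homomorphisms. Hence \(\psi\in\Hom\big((R_j,\cdot),(R_i,\cdot)\big)\), and Proposition~\ref{Prop21} applies to it.

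Next, I will invoke Lemma~\ref{Lem24}(1). Since \(i\neq j\), it gives \(\psi(m_j)\in R_i^{\times}\) for every \(m_j\in R_j\), so \(\operatorname{Im}(\psi)\subseteq R_i^\times\). I then check that a unit is never a zero divisor. If \(u\in R_i^\times\) and \(ut=0\), multiplying by \(u^{-1}\) gives \(t=0\), so no nonzero \(t\) annihilates \(u\). Therefore \(\operatorname{Im}(\psi)\) contains no zero divisors. Condition (c) of Proposition~\ref{Prop21}(1) holds, and the implication (c)\(\Rightarrow\)(a) yields \(\psi=\ee{R_j}{R_i}\).

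The second identity, for \((\theta^{-1})_{ik}\theta_{kj}\), follows by the same argument. The only change is to use Lemma~\ref{Lem24}(2) in place of Lemma~\ref{Lem24}(1); equivalently, one can apply the first identity to the automorphism \(\theta^{-1}\), noting that \((\theta^{-1})^{-1}=\theta\).

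The only delicate point, and it is mild, is the degenerate case where \(R_i\) is the zero ring. In that case \(R_i=\{0\}=\{1\}\), so every map into \(R_i\) is automatically \(\ee{R_j}{R_i}\) and the conclusion holds trivially. Otherwise the argument above applies unchanged. No other obstacle is expected: the real content has already been packaged in Lemma~\ref{Lem24} and Proposition~\ref{Prop21}.
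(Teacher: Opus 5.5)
Your proof is correct and follows the paper's argument essentially verbatim: Lemma~\ref{Lem24} puts the image of $\theta_{ik}(\theta^{-1})_{kj}$ inside $R_i^\times$, units are not zero divisors, and Proposition~\ref{Prop21}(1) then forces the map to be trivial, with the second identity obtained symmetrically. Your separate treatment of the zero ring is harmless but not needed, since the zero ring has no zero divisors and the general argument already covers it.
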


\begin{proof}
Fix \(i,j \in \llbracket 1,n\rrbracket\) with \(i\neq j\).

From Proposition~\ref{Prop23}, equation \((1')\), we have 
\[
\str{i}{k}\theta_{ik}(\theta^{-1})_{kj}(m_j) = \e{R_i}
\qquad\text{for all } m_j \in R_j.
\]
By Lemma~\ref{Lem24}, each factor
$\theta_{ik}(\theta^{-1})_{kj}(m_j)$
is invertible in \(R_i\).
Therefore, for each $k \in \llbracket{1,n}\rrbracket$, the image of
$\theta_{ik}(\theta^{-1})_{kj}$
contains no zero divisors.
By Proposition~\ref{Prop21}, it follows that
$\theta_{ik}(\theta^{-1})_{kj} = \ee{R_j}{R_i}$.

The second identity is proved in the same way, using equation \((2')\) of Proposition~\ref{Prop23}.
\end{proof}

Building on the previous lemma, we now investigate the properties of the indices for which the zero element is preserved.
\begin{lemma}\label{Lem:Dring_zero}
Let \(R_i\) be \(D\)-rings which are total rings of fractions for all \(i \in \llbracket 1,n\rrbracket\).

Let \(\theta \in \Aut(R_1 \times \dotsm \times R_n, \star)\), and let \((\theta_{ij})_{1 \leq i,j \leq n}\) be the associated family defined as in Theorem~\ref{Theo252}.

Then, for each \(r \in \llbracket 1,n\rrbracket\), there exist indices \(c(r),d(r) \in \llbracket 1,n\rrbracket\) such that
\begin{align*}
    (\theta^{-1})_{c(r)r}(0_{R_r}) &= 0_{R_{c(r)}},
    & \theta_{r c(r)}(0_{R_{c(r)}}) &= 0_{R_r}, &
    \theta_{d(r)r}(0_{R_r}) &= 0_{R_{d(r)}},
    & (\theta^{-1})_{r d(r)}(0_{R_{d(r)}}) &= 0_{R_r}.
\end{align*}
\end{lemma}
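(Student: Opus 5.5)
Fix \(r\in\llbracket 1,n\rrbracket\). We start from the diagonal relation \((1)\) of Proposition~\ref{Prop23}, applied at index \(r\) to the element \(0_{R_r}\):
\[
\prod_{\ell=1}^{n}\theta_{r\ell}\bigl((\theta^{-1})_{\ell r}(0_{R_r})\bigr)=0_{R_r}.
\]
The product is taken in the commutative monoid \((R_r,\cdot)\); commutativity of the factors is guaranteed anyway by the centrality condition in \(\mathcal{M}\). By Proposition~\ref{Prop:unit_or_nilpotent}, each factor is either a unit or nilpotent in \(R_r\). A product of units is a unit and cannot equal \(0_{R_r}\), since \(R_r\neq 0\) because a ring with \(0=1\) has no nontrivial behaviour here. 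Hence some factor, with index \(\ell=c(r)\), is not a unit and is therefore nilpotent, in particular a zero divisor. Thus \(\operatorname{Im}(\theta_{r c(r)})\) contains a zero divisor.

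Since \(R_r\) is a \(D\)-ring, Proposition~\ref{Prop21}(3) applied to \(\psi=\theta_{rc(r)}\) gives \(\theta_{rc(r)}(0_{R_{c(r)}})=0_{R_r}\). We also need \((\theta^{-1})_{c(r)r}(0_{R_r})=0_{R_{c(r)}}\). Write \(x=(\theta^{-1})_{c(r)r}(0_{R_r})\). The element \(\theta_{rc(r)}(x)\) is a non-unit. If \(x\) were a unit of \(R_{c(r)}\), then \(\theta_{rc(r)}(x)\) would be a unit by Lemma~\ref{Lem227}(1). Hence \(x\) is nilpotent, so \(\operatorname{Im}((\theta^{-1})_{c(r)r})\) contains a zero divisor. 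Proposition~\ref{Prop21}(3) applied in the \(D\)-ring \(R_{c(r)}\) then yields \((\theta^{-1})_{c(r)r}(0_{R_r})=0_{R_{c(r)}}\).

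For \(d(r)\), we run the same argument with the roles of \(\theta\) and \(\theta^{-1}\) swapped. We use relation \((2)\) of Proposition~\ref{Prop23} at index \(r\), evaluated at \(0_{R_r}\):
\[
\prod_{\ell}(\theta^{-1})_{r\ell}\theta_{\ell r}(0_{R_r})=0_{R_r}.
\]
This produces an index \(d(r)\) with \((\theta^{-1})_{rd(r)}\theta_{d(r)r}(0_{R_r})\) nilpotent. The same unit/nilpotent reasoning then gives \((\theta^{-1})_{rd(r)}(0_{R_{d(r)}})=0_{R_r}\) and \(\theta_{d(r)r}(0_{R_r})=0_{R_{d(r)}}\).

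The only delicate step is passing from "the composite value is a non-unit" to "the inner value is a non-unit". This relies on Lemma~\ref{Lem227}(1) and on the dichotomy in Proposition~\ref{Prop:unit_or_nilpotent}, which is what allows Proposition~\ref{Prop21}(3) to be applied to both maps in the composite.
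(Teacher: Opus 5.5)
Your proposal is correct and follows essentially the same route as the paper: evaluate relation $(1)$ (resp.\ $(2)$) of Proposition~\ref{Prop23} at $0_{R_r}$, pick a non-unit factor, use Lemma~\ref{Lem227} to see that the inner value is also a non-unit, and apply Proposition~\ref{Prop21}(3) to both maps in the composite. The differences are cosmetic: you derive $\theta_{rc(r)}(0_{R_{c(r)}})=0_{R_r}$ before the inner identity, and you cite part (3) of Proposition~\ref{Prop21} where the paper loosely cites part (2). Your aside that $R_r\neq 0$ does not really justify that assumption, but the paper makes the same implicit nonzero-ring assumption, and for a zero factor the lemma holds trivially with $c(r)=d(r)=r$.
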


\begin{proof}
Fix \(r \in \llbracket 1,n\rrbracket\).

By equation~\((1)\) of Proposition~\ref{Prop23} evaluated at $0_{R_r}$ we have
\[
\theta_{r1}(\theta^{-1})_{1r}(0_{R_r})
\st{R_r}
\theta_{r2}(\theta^{-1})_{2r}(0_{R_r})
\st{R_r}
\cdots
\st{R_r}
\theta_{rn}(\theta^{-1})_{nr}(0_{R_r})
=0_{R_r}.
\]
Hence at least one factor is not a unit in \(R_r\).
Therefore, there exists
\(c(r)\in \llbracket 1,n\rrbracket\) such that
\[
\theta_{r c(r)}(\theta^{-1})_{c(r)r}(0_{R_r})\notin R_r^\times.
\]

Now \((\theta^{-1})_{c(r)r}(0_{R_r})\in R_{c(r)}\), and since \(R_{c(r)}\) is a total ring of fractions, it is either a unit or a zero divisor.
Suppose that \((\theta^{-1})_{c(r)r}(0_{R_r})\) is a unit.
Then, by Lemma~\ref{Lem227},
\[
\theta_{r c(r)}\big((\theta^{-1})_{c(r)r}(0_{R_r})\big)\in R_r^\times,
\]
a contradiction, so it is a zero divisor.
By Proposition~\ref{Prop21} equation (2), it follows that
\[
(\theta^{-1})_{c(r)r}(0_{R_r})=0_{R_{c(r)}}.
\]
Moreover, Proposition~\ref{Prop21} equation (2) also gives
\[
\theta_{r c(r)}(0_{R_{c(r)}})=0_{R_r}.
\]

The proof of the existence of \(d(r)\in\llbracket 1,n\rrbracket\) such that
\[
\theta_{d(r)r}(0_{R_r})=0_{R_{d(r)}}
\quad\text{and}\quad
(\theta^{-1})_{r d(r)}(0_{R_{d(r)}})=0_{R_r}
\]
is analogous, using equation~\((2)\) of Proposition~\ref{Prop23} in place of equation~\((1)\), and exchanging \(\theta\) and \(\theta^{-1}\).
\end{proof}

The previous lemma identifies, for each component, indices along which the zero element is preserved.
The following result shows that these indices in fact determine the behaviour of all the component maps, reducing the structure to a single nontrivial entry in each row and column.
\begin{lemma}\label{Lem331}
Let \(R_i\) be \(D\)-rings which are total rings of fractions for all \(i\in \llbracket 1,n\rrbracket\).
Let \(\theta \in \Aut(R_1\times \dotsm \times R_n, \star)\), and let \((\theta_{ij})_{1 \leq i,j \leq n}\) be the associated family defined as in Theorem~\ref{Theo252}.

Then there exist maps
\[
c,d:\llbracket 1,n\rrbracket \longrightarrow \llbracket 1,n\rrbracket
\]
such that:
\begin{enumerate}
    \item For each \(r\in \llbracket 1,n\rrbracket\), we have
    \begin{align*}
        \theta_{r c(r)}(0_{R_{c(r)}}) &= 0_{R_r}, & (\theta^{-1})_{c(r)r}(0_{R_r}) &= 0_{R_{c(r)}}, & \theta_{r c(r)}(\theta^{-1})_{c(r)r} &= \id{R_r}, \\
        (\theta^{-1})_{r d(r)}(0_{R_{d(r)}}) &= 0_{R_r}, & \theta_{d(r)r}(0_{R_r}) &= 0_{R_{d(r)}}, & (\theta^{-1})_{r d(r)}\theta_{d(r)r} &= \id{R_r}.
    \end{align*}
    and for all $i \in \llbracket{1,n}\rrbracket$ with \(i\neq r\),
    \[
    \theta_{i c(r)}=\ee{R_{c(r)}}{R_i}, \quad
    \theta_{c(r)i}=\ee{R_i}{R_{c(r)}}, \quad
    (\theta^{-1})_{i d(r)}=\ee{R_{d(r)}}{R_i}, \quad\text{and}\quad
    (\theta^{-1})_{d(r)i}=\ee{R_i}{R_{d(r)}}.
    \]
    \item the maps \(c\) and \(d\) are injective.
\end{enumerate}
\end{lemma}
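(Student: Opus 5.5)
The plan is to take the indices \(c(r)\) and \(d(r)\) supplied by Lemma~\ref{Lem:Dring_zero}. The first four zero-preservation identities of item (1) then hold by construction, and everything else should follow from Lemma~\ref{Lem:idempotent} combined with Proposition~\ref{Prop21}.

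\textbf{Triviality of the column entries.} Fix \(r\) and \(i\neq r\). Lemma~\ref{Lem:idempotent}, applied with \(k=c(r)\) and \(j=r\), gives
\[
\theta_{i c(r)}(\theta^{-1})_{c(r) r}=\ee{R_r}{R_i}.
\]
Evaluating at \(0_{R_r}\) and using \((\theta^{-1})_{c(r)r}(0_{R_r})=0_{R_{c(r)}}\), we get \(\theta_{ic(r)}(0_{R_{c(r)}})=1_{R_i}\). Proposition~\ref{Prop21}(1) then forces \(\theta_{ic(r)}=\ee{R_{c(r)}}{R_i}\). The same argument with the roles of \(\theta\) and \(\theta^{-1}\) exchanged, using the second identity of Lemma~\ref{Lem:idempotent} and \(\theta_{d(r)r}(0_{R_r})=0_{R_{d(r)}}\), gives \((\theta^{-1})_{id(r)}=\ee{R_{d(r)}}{R_i}\).

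\textbf{The identity relations.} For \(\theta_{rc(r)}(\theta^{-1})_{c(r)r}=\id{R_r}\), I will show that every other factor in equation \((1)\) of Proposition~\ref{Prop23} at row \(r\) is trivial. Since column \(c(r)\) of \(\theta\) has only the entry \(\theta_{rc(r)}\) nontrivial, Theorem~\ref{Theo252} gives \(\theta\circ\iota_{c(r)}=\iota_r\circ\theta_{rc(r)}\). Evaluating at \(0\), \(\theta\) sends the element with \(0\) in slot \(c(r)\) and \(1\) elsewhere to the element \(z_r\) with \(0\) in slot \(r\) and \(1\) elsewhere. Applying \(\theta^{-1}\) and reading coordinates, \((\theta^{-1})_{ar}(0_{R_r})=1_{R_a}\) for every \(a\neq c(r)\), so \((\theta^{-1})_{ar}\) is trivial by Proposition~\ref{Prop21}(1). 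In equation \((1)\) for row \(r\), all factors with \(\ell\neq c(r)\) are therefore identically \(1\), leaving \(\theta_{rc(r)}(\theta^{-1})_{c(r)r}=\id{R_r}\). The symmetric argument with \(\theta^{-1}\) and \(d\) yields \((\theta^{-1})_{rd(r)}\theta_{d(r)r}=\id{R_r}\).

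\textbf{Injectivity and the row entries.} Injectivity of \(c\) is immediate from the column triviality. If \(c(r)=c(s)\) with \(r\neq s\), then applying column triviality at \(s\) with \(i=r\) gives \(\theta_{rc(r)}=\theta_{rc(s)}\) trivial, contradicting \(\theta_{rc(r)}(0_{R_{c(r)}})=0_{R_r}\). Injectivity of \(d\) is proved the same way. On the finite set \(\llbracket 1,n\rrbracket\), \(c\) and \(d\) are then bijections, so each column of \(\theta\) (resp.\ \(\theta^{-1}\)) has exactly one nontrivial entry. The remaining row-type statements, \(\theta_{c(r)i}\) and \((\theta^{-1})_{d(r)i}\) trivial for \(i\neq r\), should come from matching these column descriptions with the relations \(c(r)\)\,/\,\(d(r)\) linking \(\theta\) and \(\theta^{-1}\), via the identities just obtained.

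\textbf{Main obstacle.} This last bookkeeping step is where I expect the difficulty. One must check that the column structure of \(\theta^{-1}\) (indexed by \(d\)) and that of \(\theta\) (indexed by \(c\)) are mutually inverse. Concretely, the first thing to try is to show \(d=c^{-1}\), by comparing \((\theta^{-1})_{c(r)r}\), which is nontrivial, with the column triviality for \(\theta^{-1}\). Transposing the column statements into the row statements in exactly the indexing written in item (1) then still has to be checked carefully.
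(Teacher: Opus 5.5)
Your handling of the zero-preservation identities, of the column entries $\theta_{ic(r)}$ and $(\theta^{-1})_{id(r)}$, and of injectivity matches the paper. For the identities $\theta_{rc(r)}(\theta^{-1})_{c(r)r}=\id{R_r}$ and $(\theta^{-1})_{rd(r)}\theta_{d(r)r}=\id{R_r}$ you take a different and more self-contained route. You use $\theta\iota_{c(r)}=\iota_r\theta_{rc(r)}$ and bijectivity of $\theta$ to get $(\theta^{-1})_{ar}$ trivial for $a\neq c(r)$. The paper instead justifies the reduction by ``$\theta_{r\ell}$ trivial for $\ell\neq c(r)$'', which at that point presupposes that $c$ is surjective (part (2) plus finiteness).

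The genuine gap is the row statements. You leave them open, and the plan you sketch cannot succeed. With the indexing printed in item (1), namely $\theta_{c(r)i}$ and $(\theta^{-1})_{d(r)i}$ trivial for $i\neq r$, the claim is false in general. The lemma carries no cardinality hypothesis, so take $R=\mathbb{Z}/4\mathbb{Z}$ and $\theta(x_1,x_2,x_3)=(x_2,x_3,x_1)$ on $R^3$. The only nontrivial entries of $\theta$ are $\theta_{12},\theta_{23},\theta_{31}$, so $\theta_{1c(1)}(0)=0$ forces $c(1)=2$. Yet $\theta_{c(1)3}=\theta_{23}=\id{R}$ although $3\neq 1$. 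Likewise $d(1)=3$ is forced, but $(\theta^{-1})_{d(1)2}=(\theta^{-1})_{32}=\id{R}$. In this example $d=c^{-1}$ does hold, so proving $d=c^{-1}$ and ``transposing'' cannot yield the printed claim; that claim would force $c\circ c=\mathrm{id}$.

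What is true, and what the paper's proof actually establishes, is that row $c(r)$ of $\theta^{-1}$ and row $d(r)$ of $\theta$ are trivial off column $r$. So the printed $\theta_{c(r)i}$ and $(\theta^{-1})_{d(r)i}$ should be read as $(\theta^{-1})_{c(r)i}$ and $\theta_{d(r)i}$. The paper gets this in one step from Lemma~\ref{Lem:idempotent}, which gives $\theta_{rc(r)}(\theta^{-1})_{c(r)i}=\ee{R_i}{R_r}$ for $i\neq r$. If $(\theta^{-1})_{c(r)i}$ were nontrivial, Proposition~\ref{Prop21}(3) would give $(\theta^{-1})_{c(r)i}(0_{R_i})=0_{R_{c(r)}}$. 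Then $\theta_{rc(r)}(0_{R_{c(r)}})=1_{R_r}$, contradicting the choice of $c(r)$. The statement for $\theta_{d(r)i}$ is symmetric, and no comparison of $c$ with $d$ is needed. Your own column result gives it just as quickly: column $i$ of $\theta^{-1}$ is trivial off row $c(i)$, and $c(r)\neq c(i)$ for $i\neq r$ by injectivity of $c$. The same argument with $\theta$ and $d$ handles $\theta_{d(r)i}$.
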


\begin{proof}
\begin{enumerate}
\item Fix \(i,r\in \llbracket 1,n\rrbracket\) with $i \neq r$.

By Lemma~\ref{Lem:Dring_zero}, there exists \(c(r)\in \llbracket 1,n\rrbracket\) such that
\[
\theta_{r c(r)}(0_{R_{c(r)}})=0_{R_r}.
\]
By Lemma~\ref{Lem:idempotent}, we have
$\theta_{i c(r)}(\theta^{-1})_{c(r)r}=\ee{R_r}{R_i}$.
Evaluating at \(0_{R_r}\), and using Lemma~\ref{Lem:Dring_zero}, we obtain
$\theta_{i c(r)}(0_{R_{c(r)}})=\e{R_i}$.
Hence, by Proposition~\ref{Prop21} equation (1), we have
$\theta_{i c(r)}=\ee{R_{c(r)}}{R_i}$.

Moreover, Lemma~\ref{Lem:idempotent} gives
$\theta_{r c(r)}(\theta^{-1})_{c(r)i} = \ee{R_i}{R_r}$.
Evaluating at \(0_{R_i}\), we obtain
\[
\theta_{r c(r)}\big((\theta^{-1})_{c(r)i}(0_{R_i})\big)=\e{R_r}.
\]
Since \(R_{c(r)}\) is a total ring of fractions, the element \((\theta^{-1})_{c(r)i}(0_{R_i})\) is either a unit or a zero divisor.
If it were a zero divisor, then by Proposition~\ref{Prop21} equations (2)--(3), it would be equal to \(0_{R_{c(r)}}\), and therefore
$\theta_{r c(r)}(0_{R_{c(r)}}) = \e{R_r}$,
contradicting the choice of \(c(r)\).
Thus \((\theta^{-1})_{c(r)i}(0_{R_i})\) is a unit, and Proposition~\ref{Prop21} equation (2) yields
$(\theta^{-1})_{c(r)i}=\ee{R_i}{R_{c(r)}}$.

Finally, equation~\((1)\) of Proposition~\ref{Prop23} yields
\[
\theta_{r c(r)}(\theta^{-1})_{c(r)r}=\id{R_r},
\]
since \(\theta_{r\ell}=\ee{R_\ell}{R_r}\) for all \(\ell\neq c(r)\).

Applying the same argument to \(\theta^{-1}\) in place of \(\theta\), we obtain an index \(d(r)\in \llbracket 1,n\rrbracket\) as in the statement.

\item We first show that \(c\) is injective.
Suppose that \(c(r)=c(r')\) for some
\(r\neq r'\).
Then part~(1), applied to \(r\), gives
\[
\theta_{r' c(r)}=\ee{R_{c(r)}}{R_{r'}}
\qquad
\text{and so}
\qquad
\theta_{r' c(r)}(0_{R_{c(r)}})=\e{R_{r'}}.
\]
On the other hand, part~(1), applied to \(r'\), gives
\[
\theta_{r' c(r')}(0_{R_{c(r')}})=0_{R_{r'}}.
\]
Since \(c(r)=c(r')\), this is a contradiction.
Thus \(c\) is injective.

The injectivity of \(d\) is proved in exactly the same way, replacing \(\theta\) with \(\theta^{-1}\).
\qedhere
\end{enumerate}
\end{proof}

The previous lemma shows that the automorphism is governed by a system of distinguished indices, with exactly one nontrivial component in each row and column.
This strongly suggests an underlying permutation-type structure.
As a first consequence, we prove that no diagonal component can be trivial.

\begin{lemma}\label{Lem34}
Let \(R_i\) be \(D\)-rings which are total rings of fractions for all \(i \in \llbracket 1,n\rrbracket\), and assume that the \(R_i\) have pairwise distinct cardinalities.

Let \(\theta \in \Aut(R_1\times \dotsm \times R_n, \star)\), and let \((\theta_{ij})_{1 \leq i,j \leq n}\) be the associated family defined as in Theorem~\ref{Theo252}.

Then, for all \(i\in \llbracket 1,n\rrbracket\), the maps \(\theta_{ii}\) and \((\theta^{-1})_{ii}\) are nontrivial monoid homomorphisms, that is,
\[
\theta_{ii}\neq \ee{R_i}{R_i}
\qquad \text{and} \qquad
(\theta^{-1})_{ii}\neq \ee{R_i}{R_i}.
\]
\end{lemma}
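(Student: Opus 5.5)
The plan is to use Lemma~\ref{Lem331} to obtain a permutation of the indices that must respect the ordering of cardinalities, and then to use pairwise distinctness to force this permutation to be the identity. Once $c(r)=r$ and $d(r)=r$, the relations of Lemma~\ref{Lem331} say directly that the diagonal entries send $0$ to $0$, hence are nontrivial by Proposition~\ref{Prop21}.

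First, $c$ is a permutation. By Lemma~\ref{Lem331}(2), $c:\llbracket 1,n\rrbracket\to\llbracket 1,n\rrbracket$ is injective, and it is therefore a permutation since the index set is finite.

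Next, $c$ is compatible with cardinalities. For each $r$, Lemma~\ref{Lem331}(1) gives
\[
\theta_{r c(r)}\,(\theta^{-1})_{c(r)r}=\id{R_r}.
\]
Hence $\theta_{rc(r)}:R_{c(r)}\to R_r$ is surjective and $(\theta^{-1})_{c(r)r}:R_r\to R_{c(r)}$ is injective. In particular $|R_r|\le |R_{c(r)}|$, witnessed by an injection.

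Now decompose $c$ into cycles and fix a cycle $r\mapsto c(r)\mapsto\dots\mapsto c^{m}(r)=r$. Composing the injections along the cycle gives
\[
|R_r|\le|R_{c(r)}|\le\dots\le|R_{c^m(r)}|=|R_r|.
\]
By the Cantor--Schröder--Bernstein theorem, all cardinalities in the cycle are equal. This step is needed because the rings may be infinite; in the finite case the equality is immediate. Since the $R_i$ have pairwise distinct cardinalities, the cycle has length $1$, that is, $c(r)=r$ for every $r$. This cardinality argument is the only step that is not a direct quotation of earlier results, and it is the main (though mild) obstacle.

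With $c=\mathrm{id}$, Lemma~\ref{Lem331}(1) gives $\theta_{rr}(0_{R_r})=0_{R_r}$. By Proposition~\ref{Prop21}(1), $\theta_{rr}\neq\ee{R_r}{R_r}$, since otherwise $\theta_{rr}(0_{R_r})=1_{R_r}$.

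For the inverse, the same argument applies to $d$. Lemma~\ref{Lem331} gives $(\theta^{-1})_{rd(r)}\,\theta_{d(r)r}=\id{R_r}$, so $|R_r|\le|R_{d(r)}|$. The map $d$ is an injective self-map of a finite set, so the cycle argument again yields $d=\mathrm{id}$. Then $(\theta^{-1})_{rr}(0_{R_r})=0_{R_r}$, so $(\theta^{-1})_{rr}$ is nontrivial by Proposition~\ref{Prop21}(1).

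One implicit point needs care: we need $0_{R_r}\neq 1_{R_r}$ in order to distinguish the value $0$ from the trivial value $1$. If the zero ring occurs among the factors, it does so at most once, by the distinctness of cardinalities. For that factor the statement is vacuous or degenerate, so we would assume that the rings are nonzero, as is standard for local rings.
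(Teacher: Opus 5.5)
Your proof is correct and rests on the same mechanism as the paper's: the retraction $\theta_{rc(r)}(\theta^{-1})_{c(r)r}=\id{R_r}$ of Lemma~\ref{Lem331} yields an injection $R_r\hookrightarrow R_{c(r)}$, and pairwise distinct cardinalities then force $c(r)=r$. The paper argues by contradiction, propagating triviality of the diagonal entries along $r,c(r),c^2(r),\dots$ to get a strictly increasing chain of cardinalities, whereas you use injectivity of $c$ and its cycle decomposition, which makes explicit the Cantor--Schr\"oder--Bernstein step that the paper uses tacitly (your zero-ring caveat is also justified); the separate argument for $d$ is unnecessary, since $c=\mathrm{id}$ together with Lemma~\ref{Lem331}(1) already gives $(\theta^{-1})_{rr}(0_{R_r})=0_{R_r}$.
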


\begin{proof}
By Lemma~\ref{Lem331}, for each \(r\in \llbracket 1,n\rrbracket\), there exists \(c(r)\in \llbracket 1,n\rrbracket\) such that $\theta_{r c(r)}(\theta^{-1})_{c(r)r} = \id{R_r}$.

Suppose for contradiction that there exists \(r\in \llbracket 1,n\rrbracket\) such that
$\theta_{rr} = \ee{R_r}{R_r}$.
Then necessarily \(c(r)\neq r\).
Since
\[
\theta_{r c(r)}(\theta^{-1})_{c(r)r} = \id{R_r}
\]
by Lemma \ref{Lem331}, the map $(\theta^{-1})_{c(r)r}:R_r\to R_{c(r)}$ is injective and so $|R_r|\leq |R_{c(r)}|$.
Since $c(r) \neq r$, by Lemma~\ref{Lem331} we have $\theta_{c(r)c(r)} = \ee{R_{c(r)}}{R_{c(r)}}$; additionally, since the cardinalities are pairwise distinct, we have
\[
|R_r|<|R_{c(r)}|.
\]

Iterating this argument, we obtain
\[
|R_r|<|R_{c(r)}|<|R_{c^2(r)}|<\dotsb<|R_{c^n(r)}|.
\]
Since the indices take values in a set of size \(n\), two of them must coincide, yielding a contradiction.


The statement for \((\theta^{-1})_{ii}\) follows by applying the same argument to \(\theta^{-1}\).
\end{proof}

\subsection{The main theorem}\label{Subsec32}

The automorphism group of a direct product of $p$-power multiplicative modular monoids decomposes as the product of the automorphism groups of its components.

\begin{theorem}\label{Theo35}
Let \(R_i\) be \(D\)-rings which are total rings of fractions, for all \(i \in \llbracket 1,n\rrbracket\), and assume that the \(R_i\) have pairwise distinct cardinalities.
Let $(M,\star)= \left(R_1\times \dotsm \times R_n, \star \right)$.
Then
\[
\Aut(M,\star) \cong \mathcal{A}(M,\star)
\]
via the isomorphism \(\Psi\) described in Theorem~\ref{Theo252}, where
\[
\mathcal{A}(M, \star) =
\left\{ (\theta_{ij})_{1 \leq i,j \leq n} ~\middle|~
\theta_{ij} =
\begin{cases}
\theta_{ii} \in \Aut(R_i, \star_{R_i}) & \text{if } i = j, \\
\ee{R_j}{R_i} & \text{if } i \ne j
\end{cases}
\right\}.
\]
In particular,
\[
\Aut(M, \star) \cong \Aut(R_1, \star_{R_1}) \times \dotsm \times \Aut(R_n, \star_{R_n}).
\]
\end{theorem}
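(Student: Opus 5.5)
The plan is to show that $\Psi$ maps $\Aut(M,\star)$ onto $\mathcal{A}(M,\star)$. The forward inclusion combines Lemmas~\ref{Lem331} and~\ref{Lem34}. The reverse inclusion is a direct check that diagonal matrices of automorphisms are invertible in $\mathcal{M}(M)$. The group isomorphism with $\prod_i \Aut(R_i,\star_{R_i})$ then follows because $\odot$ acts componentwise on $\mathcal{A}(M,\star)$.

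For the forward direction, let $\theta\in\Aut(M,\star)$ and take the injective maps $c,d$ from Lemma~\ref{Lem331}. Since $c$ is an injective self-map of a finite set, it is a permutation. The first step is to show $c=\mathrm{id}$. Suppose $c(r)=s\neq r$ for some $r$. Applying Lemma~\ref{Lem331}(1) to the index $r$ and to $i=s\neq r$ gives $\theta_{s\,c(r)}=\ee{R_{s}}{R_s}$, that is, $\theta_{ss}=\ee{R_s}{R_s}$. This contradicts Lemma~\ref{Lem34}. Hence $c(r)=r$ for every $r$, and symmetrically $d(r)=r$.

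Lemma~\ref{Lem331}(1) with $c(r)=r$ then gives $\theta_{ir}=\ee{R_r}{R_i}$ and $\theta_{ri}=\ee{R_i}{R_r}$ for all $i\neq r$, so every off-diagonal entry of the matrix of $\theta$ is trivial; the same holds for $\theta^{-1}$. It also gives $\theta_{rr}(\theta^{-1})_{rr}=\id{R_r}$ and $(\theta^{-1})_{rr}\theta_{rr}=\id{R_r}$. Therefore $\theta_{rr}\in\Aut(R_r,\star_{R_r})$ with inverse $(\theta^{-1})_{rr}$, and so $\Psi(\theta)\in\mathcal{A}(M,\star)$.

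For the reverse direction, take $(\theta_{ij})\in\mathcal{A}(M,\star)$. The commutation condition in Definition-Lemma~\ref{DefLem:n-fold-M} holds trivially, since every off-diagonal image is $\{1\}$, so the matrix lies in $\mathcal{M}(M)$. The diagonal matrix with entries $\theta_{ii}^{-1}$ satisfies relations $(1)$--$(2')$ of Proposition~\ref{Prop23}. In each product $\str{i}{\ell}$ only the term $\ell=i$ (or $\ell=j$) can be non-trivial, and when $i\neq j$ every term is a composite involving a trivial map, hence trivial. So $\Psi^{-1}$ of the matrix is an automorphism.

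Finally, for diagonal matrices $(\alpha\odot\alpha')_{ii}=\alpha_{ii}\circ\alpha'_{ii}$, and the off-diagonal entries stay trivial. Hence $(\theta_{ij})\mapsto(\theta_{11},\dots,\theta_{nn})$ is a group isomorphism $\mathcal{A}(M,\star)\to\prod_i\Aut(R_i,\star_{R_i})$. Composing with $\Psi$, which is a monoid isomorphism by Theorem~\ref{Theo252}, gives the result.

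The step I expect to need the most care is the first one, showing that $c$ is the identity. One must apply Lemma~\ref{Lem331}(1) with the roles of the indices read correctly, so that $\theta_{i\,c(r)}$ trivial for $i\neq r$ becomes, at $i=c(r)$, triviality of a diagonal entry that Lemma~\ref{Lem34} forbids. Once that is in place, the rest is bookkeeping.
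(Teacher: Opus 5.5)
Your proof is correct and is essentially the paper's argument. Both rest on Lemma~\ref{Lem34}, and your step $c=\mathrm{id}$, followed by reading off that $\theta_{i\,c(r)}$ is trivial from Lemma~\ref{Lem331}, unwinds to exactly the paper's direct computation $\theta_{ij}\bigl((\theta^{-1})_{jj}(0_{R_j})\bigr)=\theta_{ij}(0_{R_j})=1_{R_i}$ via Lemma~\ref{Lem:idempotent} and Proposition~\ref{Prop21}.

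One small caution. The clause ``$\theta_{c(r)i}$ trivial'' in the statement of Lemma~\ref{Lem331} is not what its proof establishes: the proof gives $(\theta^{-1})_{c(r)i}$ trivial, and the stated clause fails, e.g., for a $3$-cycle permuting three equal factors. You do not need that clause, though, since $\theta_{i\,c(r)}$ trivial for $i\neq r$, taken over all $r$ with $c=\mathrm{id}$, already covers every off-diagonal entry.
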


\begin{proof}
Let \(\theta \in \Aut(M, \star)\).
By Theorem~\ref{Theo252}, the map \(\Psi\) is injective and $\Psi(\theta) = (\theta_{ij})_{1 \leq i,j \leq n}$, where each
\[
\theta_{ij} \in \Hom\bigl( (R_{j}, \star_{R_j}), (R_{i}, \star_{R_i}) \bigr).
\]

Fix \(i,j \in \llbracket 1,n\rrbracket\) with \(i \neq j\).
By Lemma~\ref{Lem:idempotent}, we have $\theta_{ij}(\theta^{-1})_{jj} = \ee{R_j}{R_i}$.
By Lemma~\ref{Lem34}, the map \((\theta^{-1})_{jj}\) is nontrivial.
Since \(R_j\) is a \(D\)-ring which is a total ring of fractions, Proposition~\ref{Prop21} yields
\[
(\theta^{-1})_{jj}(0_{R_j})=0_{R_j}.
\]
Hence
\[
\theta_{ij}(0_{R_j})
=
\theta_{ij}\bigl((\theta^{-1})_{jj}(0_{R_j})\bigr)
=
\ee{R_j}{R_i}(0_{R_j})
=
\e{R_i}.
\]
Therefore, by Proposition~\ref{Prop21}, we conclude that
\[
\theta_{ij}=\ee{R_j}{R_i}
\qquad\text{for all } i\neq j.
\]

We now determine the diagonal entries.
Fix \(i\in \llbracket 1,n\rrbracket\).
By equation~\((1)\) of Proposition~\ref{Prop23}, we have
\[
\str{i}{\ell}\theta_{i\ell}(\theta^{-1})_{\ell i}=\id{R_i}.
\]
For every \(\ell\neq i\), we have already shown that $\theta_{i\ell}=\ee{R_\ell}{R_i}$.
Hence the above identity reduces to
\[
\theta_{ii}(\theta^{-1})_{ii}=\id{R_i}.
\]
Similarly, equation~\((2)\) of Proposition~\ref{Prop23} gives
\[
\str{i}{\ell}(\theta^{-1})_{i\ell}\theta_{\ell i}=\id{R_i}.
\]
Again, for every \(\ell\neq i\), the maps \(\theta_{\ell i}\) are trivial, so this reduces to
\[
(\theta^{-1})_{ii}\theta_{ii}=\id{R_i}.
\]
Thus \(\theta_{ii}\) is invertible, with inverse \((\theta^{-1})_{ii}\), and therefore
\[
\theta_{ii}\in \Aut(R_i,\star_{R_i}).
\]

We have shown that $\Psi(\Aut(M,\star)) \subseteq \mathcal A(M,\star)$.
Conversely, let \((\varphi_{ij})_{1\leq i,j\leq n} \in \mathcal A(M,\star)\).
By definition, $\varphi_{ii}\in \Aut(R_i,\star_{R_i})$ and $\varphi_{ij}=\ee{R_j}{R_i}$ for $i\neq j$.
Define
\[
\varphi : M \longrightarrow M,
\qquad
(x_1,\dots,x_n)\longmapsto \bigl(\varphi_{11}(x_1),\dots,\varphi_{nn}(x_n)\bigr).
\]
Then \(\varphi\) is an automorphism of \((M,\star)\), and \(\Psi(\varphi)=(\varphi_{ij})\).
Hence $\mathcal A(M,\star)\subseteq \Psi(\Aut(M,\star))$.

Therefore,
\[
\Psi(\Aut(M,\star))=\mathcal A(M,\star), \qquad\text{and so}\qquad \Aut(M,\star)\cong \mathcal A(M,\star).
\]

The final statements follow immediately.
\end{proof}

\begin{example}\label{Exp522}
By \S 1.1, finite \(D\)-rings which are total rings of fractions are precisely finite local rings.
Typical examples therefore include:
\begin{itemize}
    \item the rings \(k[x]/(x^n)\), where \(k\) is a finite field and \(n \geq 1\),
    \item the rings \(\mathbb{Z}/p^{e}\mathbb{Z}\), where \(p\) is prime and \(e \geq 1\),
    \item more generally, mixed nilpotent examples such as
    \[
    \mathbb{Z}/p^{e}\mathbb{Z}[x]/(x^m), \qquad e,m \geq 1.
    \]
\end{itemize}

In the first case, one recovers the field \(k\) when \(n=1\), while for \(n \geq 2\) one obtains non-trivial nilpotent extensions.
In all these cases, the rings are finite local, hence finite \(D\)-rings which are total rings of fractions.

\medskip

Theorem~\ref{Theo35} therefore applies to any finite product of such rings whose cardinalities are pairwise distinct.
For instance,
\[
\Aut\!\bigl(\mathbb Z/2\mathbb Z \times \mathbb Z/4\mathbb Z \times \mathbb Z/8\mathbb Z,\cdot\bigr)
\cong
\Aut(\mathbb Z/2\mathbb Z,\cdot)\times
\Aut(\mathbb Z/4\mathbb Z,\cdot)\times
\Aut(\mathbb Z/8\mathbb Z,\cdot),
\]
showing that the theorem does not require the factors to involve distinct primes.

One may also combine different finite families.
For example,
\[
\Aut\!\bigl(\mathbb Z/4\mathbb Z \times \mathbb F_2[x]/(x^3)\times \mathbb Z/27\mathbb Z,\cdot\bigr)
\cong
\Aut(\mathbb Z/4\mathbb Z,\cdot)\times
\Aut(\mathbb F_2[x]/(x^3),\cdot)\times
\Aut(\mathbb Z/27\mathbb Z,\cdot).
\]

Also, for primes \(p_1,\dots,p_n\) and integers \(e_1,\dots,e_n \geq 1\) such that the cardinalities \(p_1^{e_1},\dots,p_n^{e_n}\) are pairwise distinct, one obtains
\[
\Aut\!\bigl( \mathbb{Z}/p_1^{e_1}\mathbb{Z} \times \dotsm \times \mathbb{Z}/p_n^{e_n}\mathbb{Z},\, \cdot \bigr) 
\cong 
\Aut(\mathbb{Z}/p_1^{e_1}\mathbb{Z}, \cdot) \times \dotsm \times \Aut(\mathbb{Z}/p_n^{e_n}\mathbb{Z}, \cdot).
\]

\medskip

The scope of Theorem~\ref{Theo35} is not restricted to the finite case.
By Theorem~\ref{thm:infinite_D_total_structure}, infinite \(D\)-rings which are total rings of fractions are precisely local rings whose maximal ideal coincides with the nilradical.
Thus Theorem~\ref{Theo35} also applies to products involving finite and infinite factors, provided that their cardinalities are pairwise distinct.
For example, if
\[
R_1=\mathbb{Z}/4\mathbb{Z}
\qquad\text{and}\qquad
R_2 = k[x_1,x_2,x_3,\dots]/(x_1^2,x_2^3,x_3^4,\dots),
\]
then \(R_1\) and \(R_2\) are \(D\)-rings which are total rings of fractions with distinct cardinalities, and one obtains
\[
\Aut(R_1\times R_2,\cdot)\cong \Aut(R_1,\cdot)\times \Aut(R_2,\cdot).
\]
\end{example}

\begin{example}\label{Exp523}
The hypothesis on pairwise distinct cardinalities in Theorem~\ref{Theo35} is essential.

Indeed, consider
\[
M = \mathbb Z/4\mathbb Z \times \mathbb Z/4\mathbb Z
\qquad
\text{and the automorphism}
\qquad
\tau : M \longrightarrow M,
\quad
(x,y) \longmapsto (y,x)
\]
of \((M,\cdot)\) which exchanges the two factors.

However, \(\tau\) is not of the form described in Theorem~\ref{Theo35}, since it is not diagonal: its associated family \((\tau_{ij})\) satisfies
\[
\tau_{12} = \id{\mathbb Z/4\mathbb Z},
\qquad
\tau_{21} = \id{\mathbb Z/4\mathbb Z},
\qquad
\tau_{11} = \ee{\mathbb Z/4\mathbb Z}{\mathbb Z/4\mathbb Z},
\qquad
\text{and}
\qquad
\tau_{22} = \ee{\mathbb Z/4\mathbb Z}{\mathbb Z/4\mathbb Z}.
\]

Thus,
\[
\Aut(\mathbb Z/4\mathbb Z \times \mathbb Z/4\mathbb Z,\cdot)
\not\cong
\Aut(\mathbb Z/4\mathbb Z,\cdot)\times \Aut(\mathbb Z/4\mathbb Z,\cdot),
\]
showing that the conclusion of Theorem~\ref{Theo35} fails when the factors have the same cardinality.
\end{example}

\bibliographystyle{plain}
\bibliography{refs}

@article{JoeSophieLiam,
    title={On the automorphism group of the monoid of the integers modulo a prime power},
    author={Atalaye, Joseph and Baker, Liam and Marques, Sophie},
    eprint={2408.06278},
    journal={arXiv},
    primaryClass={math.RA},
    year={2024},
    url={https://arxiv.org/abs/2408.06278}
}

@article {bidwell2006automorphisms,
    AUTHOR = {Bidwell, J. N. S. and Curran, M. J. and McCaughan, D. J.},
     TITLE = {Automorphisms of direct products of finite groups},
   JOURNAL = {Archiv der Mathematik},
    VOLUME = {86},
      YEAR = {2006},
    NUMBER = {6},
     PAGES = {481--489},
   MRCLASS = {20D45 (20D40)},
  MRNUMBER = {2241597},
MRREVIEWER = {Ma.\ Jes\'us\ Iranzo},
       DOI = {10.1007/s00013-005-1547-z}
}

@article{zbMATH03780218,
 author = {Johnson, Francis E. A.},
 title = {Automorphisms of direct products of groups and their geometric realisations},
 journal = {Mathematische Annalen},
 issn = {0025-5831},
 volume = {263},
 pages = {343--364},
 year = {1983},
 doi = {10.1007/BF01457136},
 zbMATH = {3780218},
 Zbl = {0495.57008}
}

@article {zbMATH08024120,
    AUTHOR = {Brescia, Mattia},
     TITLE = {A determinant for automorphisms of groups},
   JOURNAL = {Communications in Algebra},
    VOLUME = {53},
      YEAR = {2025},
    NUMBER = {6},
     PAGES = {2484--2509},
   MRCLASS = {20F28 (20E36 20H99)},
  MRNUMBER = {4885066},
MRREVIEWER = {Gary\ L.\ Peterson},
       DOI = {10.1080/00927872.2024.2446540}
}

@article {bidwell2008automorphisms,
    AUTHOR = {Bidwell, J. N. S.},
     TITLE = {Automorphisms of direct products of finite groups. {II}},
   JOURNAL = {Archiv der Mathematik},
    VOLUME = {91},
      YEAR = {2008},
    NUMBER = {2},
     PAGES = {111--121},
   MRCLASS = {20D45},
  MRNUMBER = {2430793},
MRREVIEWER = {Ma.\ Jes\'us\ Iranzo},
       DOI = {10.1007/s00013-008-2653-5}
}

@article {ahmadidelir2012automorphisms,
    AUTHOR = {Ahmadidelir, Karim and Doostie, Hossein},
     TITLE = {On the automorphisms of direct product of monogenic semigroups and monoids},
   JOURNAL = {Turkish Journal of Mathematics},
    VOLUME = {36},
      YEAR = {2012},
    NUMBER = {1},
     PAGES = {95--99},
      ISSN = {1300-0098,1303-6149},
   MRCLASS = {20M15 (20B25)},
  MRNUMBER = {2881640},
MRREVIEWER = {Ahmet\ Sinan\ \c Cevik},
       DOI = {10.3906/mat-1003-172}
}

@book{zbMATH01650459,
  title={Mathematical foundations of computational engineering: a handbook},
  author={Pahl, Peter J and Damrath, Rudolf},
  year={2012},
  publisher={Springer Science \& Business Media},
  DOI={10.1007/978-3-642-56893-0}
}

@book {zbMATH06968189,
    AUTHOR = {Passi, Inder Bir Singh and Singh, Mahender and Yadav, Manoj
              Kumar},
     TITLE = {Automorphisms of finite groups},
    SERIES = {Springer Monographs in Mathematics},
 PUBLISHER = {Springer, Singapore},
      YEAR = {2018},
     PAGES = {xix+217},
      ISBN = {978-981-13-2894-7},
   MRCLASS = {20D45 (20D15 20E18 20J05)},
  MRNUMBER = {3887655},
MRREVIEWER = {Andrea\ Caranti},
       DOI = {10.1007/978-981-13-2895-4}
}

@article {zbMATH03549232,
    AUTHOR = {Sah, Chih-han},
     TITLE = {Automorphisms of finite groups},
   JOURNAL = {Journal of Algebra},
    VOLUME = {10},
      YEAR = {1968},
     PAGES = {47--68},
      ISSN = {0021-8693},
   MRCLASS = {20.22},
  MRNUMBER = {229713},
MRREVIEWER = {M.\ F.\ Newman},
       DOI = {10.1016/0021-8693(68)90104-X}
}

\end{document}